\numberwithin{equation}{section}
\newcommand{\F}{\mathbb{F}}
\newtheorem{theorem}{Theorem}[section]
\newtheorem{lemma}[theorem]{Lemma}
\newtheorem{proposition}[theorem]{Proposition}
\newtheorem{conjecture}[theorem]{Conjecture}
\newcommand{\proofbox}{\hspace*{.2in}\raisebox{3pt}
{\fbox{\rule{0pt}{.7pt}\rule{.7pt}{0pt}}}}
\def\whitebox{{\hbox{\hskip 1pt
 \vrule height 6pt depth 1.5pt
 \lower 1.5pt\vbox to 7.5pt{\hrule width
    3.2pt\vfill\hrule width 3.2pt}%
 \vrule height 6pt depth 1.5pt
 \hskip 1pt } }}
\def\qed{\ifhmode\allowbreak\else\nobreak\fi\hfill\quad\nobreak
     \proofbox\medbreak}
\begin{document}

\title[Strongly Regular Cayley Graphs]
{Constructions of Strongly Regular Cayley Graphs Using Index Four Gauss Sums}

\author[Ge, Xiang and Yuan]{Gennian Ge$^*$, Qing Xiang$^{**}$, and Tao Yuan}

\thanks{$^*$ Research supported by the National Outstanding Youth Science  Foundation of China under Grant No.~10825103, National Natural
Science Foundation of China under Grant No.~61171198,  and Specialized Research Fund for the Doctoral Program of Higher Education.}
\thanks{$^{**}$Research supported in part by NSF Grant DMS 1001557, by the
Overseas Cooperation Fund (grant 10928101) of China, and by Y. C. Tang disciplinary development fund, Zhejiang University.}

\address{Department of Mathematics, Zhejiang University, Hangzhou 310027, Zhejiang, P. R. China}
\email{gnge@zju.edu.cn}

\address{Department of Mathematical Sciences, University of Delaware, Newark, DE 19716, USA} \email{xiang@math.udel.edu}

\address{Department of Mathematics, Zhejiang University, Hangzhou 310027, Zhejiang, P. R. China}
\email{matheufreedom@gmail.com}

\keywords{Cyclotomy, Gauss sum, index 4 Gauss sum, strongly regular graph.}

\begin{abstract} We give a construction of strongly regular Cayley graphs on finite fields $\F_q$ by using union of cyclotomic classes and index $4$ Gauss sums. In particular, we obtain two infinite families of strongly regular graphs with new parameters.
\end{abstract}

\maketitle

\section{Introduction}
A {\it strongly regular graph} srg$(v,k,\lambda,\mu)$ is a simple and undirected graph, neither complete nor edgeless, that has the following properties:

(1) It is a regular graph of order $v$ and valency $k$.

(2) For each pair of adjacent vertices $x,y$, there are $\lambda$ vertices adjacent to both $x$ and $y$.

(3) For each pair of nonadjacent vertices $x,y$, there are $\mu$ vertices adjacent to both $x$ and $y$.

For example, a pentagon is an srg$(5,2,0,1)$,  the $3\times 3$ grid (the Cartesian product of two triangles) is an srg$(9,4,1,2)$, and the Petersen graph is an srg$(10,3,0,1)$. The first two examples can be generalized. Let $q=4t+1$ be a prime power. The Paley graph P($q$) is the graph with the elements of the finite field $\mathbb{F}_q$ as vertices; two vertices are adjacent if and only if their difference is  a nonzero square in $\mathbb{F}_q$. One can readily check that P($q$) is an srg$(4t+1,2t,t-1,t)$. For a survey on strongly regular graphs, we refer the reader to \cite{bh} and \cite{cg}. Strongly regular graphs are closely related to two-weight linear codes, projective two-intersection sets in finite geometry, quasi-symmetric designs, and partial difference sets. We refer the reader to \cite{CK86, Ma94, bh, cg} for these connections.

The adjacency matrix of a (simple) graph $\Gamma$ is a $(0,1)$-matrix $A$ with rows and columns both indexed by the vertices of $\Gamma$, where $A_{xy}=1$ if and only if $x,y$ have an edge in $\Gamma$. Clearly $A$ is symmetric with zeros on the diagonal. The eigenvalues of $\Gamma$ are by definition the eigenvalues of its adjacency matrix $A$. For convenience, we call an eigenvalue of $\Gamma$ { \it restricted } if it has an eigenvector orthogonal to the all-one vector. Below is a well-known characterization of srg by using their eigenvalues; we refer the reader to \cite{bh} for its proof.

\begin{theorem}\label{basicthm}
For a graph $\Gamma$ of order $v$, neither complete nor edgeless, with adjacency matrix $A$, the following are equivalent:\\
\quad (1) $\Gamma$ is an srg$(v,k,\lambda,\mu)$ for certain integers $ k,\lambda,\mu$.\\
\quad (2) $A^2=(\lambda-\mu)A+(k-\mu)I +\mu J$, where I, J are the identity matrix and the all-one matrix, respectively.\\
\quad (3) A has precisely two distinct restricted eigenvalues.
\end{theorem}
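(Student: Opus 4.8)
The plan is to prove the equivalences as a cycle, handling $(1)\Leftrightarrow(2)$ by a direct comparison of matrix entries and $(2)\Leftrightarrow(3)$ through the spectral decomposition of the real symmetric matrix $A$.

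\emph{The equivalence $(1)\Leftrightarrow(2)$.} The key observation is that the $(x,y)$ entry of $A^2$ equals the number of walks of length two from $x$ to $y$, i.e.\ the number of common neighbours of $x$ and $y$. Comparing this entry with the corresponding entry of $(\lambda-\mu)A+(k-\mu)I+\mu J$ in the three cases $x=y$, $x\sim y$, and $x\not\sim y$, I would read off that the diagonal entries force every vertex to have degree $k$ (regularity), while the off-diagonal entries match the counts $\lambda$ (adjacent) and $\mu$ (nonadjacent). Since each case is an identity that can be read in either direction, this yields both implications at once.

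\emph{The direction $(2)\Rightarrow(3)$.} From (2) the diagonal gives $\deg(x)=k$ for all $x$, so $\Gamma$ is regular and $\mathbf{1}$ is an eigenvector of $A$ with eigenvalue $k$, while $Jw=0$ for every $w\perp\mathbf{1}$. Applying (2) to a restricted eigenvector $w$ with $Aw=\theta w$ gives $\theta^2=(\lambda-\mu)\theta+(k-\mu)$, so each restricted eigenvalue is a root of a fixed quadratic and there are at most two. To see that there are exactly two, I would rule out a single restricted value $\theta$: then $A=\theta I+\frac{k-\theta}{v}J$ would have all off-diagonal entries equal to $\frac{k-\theta}{v}\in\{0,1\}$, forcing $\Gamma$ to be edgeless or complete, contrary to hypothesis.

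\emph{The direction $(3)\Rightarrow(2)$.} Here I would first record that once $\Gamma$ is regular the vector $\mathbf{1}$ is an eigenvector, $\mathbf{1}^\perp$ is $A$-invariant, and the eigenvalues of $A$ acting on $\mathbf{1}^\perp$ are precisely the restricted eigenvalues $r,s$; then $B:=(A-rI)(A-sI)$ annihilates $\mathbf{1}^\perp$ and multiplies $\mathbf{1}$ by $(k-r)(k-s)$, so $B=\frac{(k-r)(k-s)}{v}J$, which is exactly an identity of the form (2). The main obstacle is the step hidden in ``once $\Gamma$ is regular'': without regularity $\mathbf{1}^\perp$ need not be $A$-invariant, so I must show that the hypothesis of exactly two restricted eigenvalues forces $\mathbf{1}$ to be an eigenvector. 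I expect this to be the delicate point of the whole argument; I would attack it using the Perron--Frobenius eigenvector of the nonnegative matrix $A$ together with the count of restricted eigenvalues, and it is here that the standing hypotheses on $\Gamma$ must be used with care. Once regularity is secured, the remaining spectral computation above is routine.
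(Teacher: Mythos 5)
Your handling of $(1)\Leftrightarrow(2)$ and of $(2)\Rightarrow(3)$ is correct, and these are the standard arguments; note that the paper itself gives no proof of this theorem, deferring to \cite{bh}, so the only comparison available is with that standard proof, which runs along the same lines as your first two steps. (One small point in $(2)\Rightarrow(3)$: besides ruling out a single restricted value, you should observe that at least one restricted eigenvalue exists, which is immediate since regularity makes $\mathbf{1}^{\perp}$ an $A$-invariant subspace of dimension $v-1\geq 1$.)

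The genuine gap is the step you yourself flagged in $(3)\Rightarrow(2)$, and it is worse than ``delicate'': under the paper's definition of a restricted eigenvalue (an eigenvalue possessing \emph{some} eigenvector orthogonal to the all-one vector), the implication ``precisely two distinct restricted eigenvalues $\Rightarrow$ $\Gamma$ regular'' is false, so no Perron--Frobenius argument can establish it. Take $\Gamma=P_4$, the path on four vertices, which is neither complete nor edgeless. Its characteristic polynomial is $x^4-3x^2+1=(x^2-x-1)(x^2+x-1)$, so all four eigenvalues are simple. For the two roots $\theta=\frac{-1\pm\sqrt{5}}{2}$ of $x^2+x-1$ the eigenvectors are the antisymmetric vectors $(1,\theta,-\theta,-1)$, which are orthogonal to $\mathbf{1}$; for the two roots of $x^2-x-1$ the eigenvectors are $(1,\theta,\theta,1)$ with nonzero coordinate sum. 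Hence $P_4$ has \emph{precisely two} distinct restricted eigenvalues, yet it is not regular, let alone strongly regular. What is true, and what the standard proof actually uses, is that once $\Gamma$ is regular of valency $k$, the restricted eigenvectors span $\mathbf{1}^{\perp}$ (a condition equivalent to $\mathbf{1}$ being an eigenvector, i.e.\ to regularity); then $B=(A-rI)(A-sI)$ vanishes on $\mathbf{1}^{\perp}$, $B\mathbf{1}=(k-r)(k-s)\mathbf{1}$, and $B=\frac{(k-r)(k-s)}{v}J$, which is exactly your closing computation. So to make $(3)\Rightarrow(2)$ sound you must either add regularity to the hypotheses of (3) (equivalently, demand that the restricted multiplicities $\dim(V_\theta\cap\mathbf{1}^{\perp})$ sum to $v-1$) or prove the theorem in the regular setting only. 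Nothing downstream in the paper is harmed: the theorem is applied exclusively to Cayley graphs $Cay(\F_q,D)$, which are regular by construction, and there your spectral argument is complete.
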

The two distinct restricted eigenvalues of an srg are usually denoted by $r$ and $s$, where $r$ is the positive eigenvalue and $s$ the negative one. The Paley graphs are probably the simplest examples of the so-called cyclotomic strongly regular graphs, which we define below. Let $\mathbb{F}_{p^f}$ be  the finite field of order $p^f$, where $p$ is a prime and $f$ is a positive integer. Let $D$ be a subset of $\mathbb{F}_{p^f}$ such that $-D=D$ and $0 \not \in D$. We define the  { \it Cayley graph} $Cay(\mathbb{F}_{p^f},D)$ to be the graph with the elements of $\mathbb{F}_{p^f}$ as vertices; two vertices are adjacent if and only if their difference belongs to $D$. When $D$ is a subgroup of the multiplicative group $\mathbb{F}_{p^f}^*$ of  $\mathbb{F}_{p^f}$ and  $Cay(\mathbb{F}_{p^f},D)$ is strongly regular, then we say that $Cay(\mathbb{F}_{p^f},D)$ is a {\it cyclotomic strongly regular graph}. Specializing to the case where $D$ is the subgroup of $\mathbb{F}_{q}^*$ consisting of the nonzero squares, where $q$ is a prime power congruent to 1 modulo 4,  we see that $Cay(\mathbb{F}_{q},D)$ is nothing but the Paley graph P($q$).

Cyclotomic srg have been extensively studied by many authors; see \cite{McE, vanLintSch, bwx, Lang, batdov, schwh, ikutam}. Some of these authors used the language of cyclic codes in their investigations. We choose to use the language of srg. Let $D$ be a subgroup of $\mathbb{F}_{p^f}^*$ of index $N>1$. If $D$ is the multiplicative group of a subfield of $\mathbb{F}_{p^f}$, then it is easy to show that   $Cay(\mathbb{F}_{p^f},D)$ is an srg. These cyclotomic srg are usually called {\it subfield examples}. Next if there exists a positive integer $t$ such that $p^t\equiv -1$ (mod $N$), then $Cay(\mathbb{F}_{p^f},D)$ is an srg by an old result of Stickelberger \cite{stic}. These examples are usually called {\it semi-primitive} cyclotomic srg. The following conjecture of Schmidt and White \cite{schwh} says that besides the two classes of cyclotomic srg mentioned above, there are only 11 sporadic examples of cyclotomic srg.

\begin{conjecture}{\em (Conjecture 4.4, \cite{schwh})} Let $\F_{p^f}$ be the finite field of order $p^f$, $N|(\frac{p^f-1}{p-1})$, $N>1$, and let $C_0$ be the subgroup of $\F_{p^f}^*$ of index $N$. Assume that $-C_0=C_0$. If ${\rm Cay}(\F_{p^f}, C_0)$ is an srg, then one of the following holds:\\

\quad (1) (subfield case) $C_0=\F_{p^e}^*$, where $e|f$,\\

\quad (2)  (semi-primitive case) There exists a positive integer $t$ such that $p^t\equiv -1$ (mod $N$),\\

\quad (3) (exceptional case)  ${\rm Cay}(\F_{p^f}, C_0)$ is one of the eleven ``sporadic" examples appearing in the following table.\\

$$\begin{tabular}{ccccc}
\hline
 & N & p & f & $[(\mathbb{Z}_N)^*:\langle p\rangle]$\\ \hline
  &11 & 3& 5 &2 \\
 &19 &5&  9 &2\\
 &35& 3& 12 &2\\
 &37& 7& 9& 4\\
 &43 &11& 7&6\\
 &67 &17& 33& 2\\
 &107 &3 &53& 2\\
 &133 &5 &18& 6  \\
 &163 &41& 81& 2\\
 &323 &3& 144&2\\
 &499 & 5 & 249& 2\\\hline
 \multicolumn{3}{c}{\quad\quad\quad\quad\quad{\bf Table \quad I}}\\\\
\end{tabular}$$
\end{conjecture}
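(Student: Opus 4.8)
The plan is to convert the strongly regular condition into a statement about Gauss sums, and then to determine exactly which Gauss sum configurations are compatible with that condition. Write $q=p^f$, fix a primitive element $\ga$ of $\F_q^*$, and let $\chi$ be a multiplicative character of $\F_q^*$ of order $N$, so that the $N$ characters that are trivial on $C_0$ are precisely $\chi^0,\chi^1,\dots,\chi^{N-1}$. Let $\psi$ be the canonical additive character $\psi(x)=\zeta_p^{\Trace(x)}$. By part (3) of Theorem \ref{basicthm}, ${\rm Cay}(\F_q,C_0)$ is an srg if and only if its restricted eigenvalues take exactly two values, and these eigenvalues are the Gauss periods $\eta_i=\sum_{x\in \ga^i C_0}\psi(x)$ for $i=0,\dots,N-1$ (the hypothesis $-C_0=C_0$ makes them real). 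First I would use orthogonality to expand each period in terms of the Gauss sums $g(\chi^j)=\sum_{x\in\F_q^*}\chi^j(x)\psi(x)$:
$$\eta_i=\frac{1}{N}\sum_{j=0}^{N-1}\chi^{-j}(\ga^i)\,g(\chi^j),\qquad g(\chi^0)=-1.$$
Thus the task becomes: translate ``the multiset $\{\eta_0,\dots,\eta_{N-1}\}$ has exactly two distinct values'' into an explicit system of algebraic equations among the $g(\chi^j)$.

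Second, I would bring in the arithmetic of Gauss sums. Each nontrivial Gauss sum lies in $\Q(\zeta_N,\zeta_p)$ and satisfies $g(\chi^j)\overline{g(\chi^j)}=q$ for $1\le j\le N-1$. The group ${\rm Gal}(\Q(\zeta_N,\zeta_p)/\Q)$ permutes the $g(\chi^j)$, and combining this action with the two-value requirement should force each nontrivial Gauss sum to be ``pure,'' i.e.\ a root of unity times $\sqrt{q}$. The decisive tool here is Stickelberger's congruence, which gives the exact $p$-adic valuation of $g(\chi^j)$ through base-$p$ digit sums of the exponents; requiring the $\eta_i$ to be rational (or conjugate quadratic) integers taking only two values imposes that these digit-sum data be highly symmetric across the Galois orbit of $\chi$.

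Third, I would recover the two listed infinite families as the cases in which this symmetry holds for structural reasons. In the subfield case $C_0=\F_{p^e}^*$ the graph is a disjoint union of complete graphs $K_{p^e}$ (the additive cosets of $\F_{p^e}$), whose eigenvalues $p^e-1$ and $-1$ are computed directly, so it is an srg. In the semi-primitive case $p^t\equiv-1\pmod N$, the classical semi-primitive evaluation of Gauss sums (Stickelberger, Baumert--McEliece) shows every $g(\chi^j)$ is a fixed rational multiple of $\sqrt{q}$ times a prescribed root of unity, so the periods collapse to two values automatically; this is exactly the old result of Stickelberger invoked in the introduction.

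The hard part, and the reason the assertion is stated as a conjecture rather than a theorem, is the converse in the remaining regime: proving that if $p^t\not\equiv-1\pmod N$ and $C_0$ is not a subfield, then the two-value condition fails except for the eleven entries of Table~I. In this non-semi-primitive regime the Gauss sums $g(\chi^j)$ are not pure and in general admit no closed-form evaluation, so controlling the absolute values and arguments of the linear combinations $\eta_i$ is precisely the open difficulty. The strategy I would pursue is to (i) use the index $[(\Z_N)^*:\la p\ra]$ to bound how the Galois orbit of $\chi$ splits; (ii) extract from the two-value condition a Diophantine constraint relating $N$, $p$, $f$, and the Stickelberger valuations; and (iii) show this constraint has only finitely many solutions, reducing to a finite search that isolates the sporadic cases. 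Step (ii) is the genuine obstacle, since it requires evaluating Gauss sums of index $2,3,4,\dots$, and only the low-index cases are currently tractable. Indeed, the contribution of the present paper is precisely a family of index $4$ Gauss sum evaluations, which signals that a complete resolution of the conjecture awaits a uniform handle on Gauss sums of arbitrary index.
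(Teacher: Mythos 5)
The statement you were asked to prove is not a theorem of this paper at all: it is the Schmidt--White conjecture, quoted from \cite{schwh}, and the paper says explicitly that ``the above conjecture remains open.'' There is therefore no proof in the paper to compare against, and no blind attempt could legitimately close it. Your proposal, read as a proof, fails at exactly the point you yourself flag: steps (ii) and (iii) of your final paragraph --- extracting a Diophantine constraint from the two-value condition in the non-semi-primitive, non-subfield regime and showing it has finitely many solutions --- are not a proof sketch with a fillable gap, they \emph{are} the open problem. What you do establish (that the subfield case gives a union of cliques, hence an srg, and that semi-primitivity forces pure Gauss sums and hence two-valued periods) is the easy converse direction, which is not what the conjecture asserts; the conjecture is the classification statement that nothing else occurs beyond Table~I.

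Beyond the self-acknowledged gap, one intermediate claim in your second step is concretely false: you assert that the Galois action combined with the two-value requirement ``should force each nontrivial Gauss sum to be pure, i.e.\ a root of unity times $\sqrt{q}$.'' The eleven sporadic examples in Table~I refute this --- they are two-valued (they are srgs) yet have index $2$, $4$, or $6$, where the Gauss sums are provably not pure (purity of $g(\chi)$ is essentially equivalent to semi-primitivity). Indeed, the entire content of the present paper, and of \cite{fengxiang} before it, is that \emph{non-pure} index $4$ (resp.\ index $2$) Gauss sums, expressed via the quartic field data $(M_0,M_1,M_2,M_3)$ of Theorem 2.4, can still produce exactly two period values when one takes suitable unions of cyclotomic classes. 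So any correct attack on the conjecture must work with the fine arithmetic of non-pure Gauss sums (as in the $M_i$-equations (\ref{Mequations})) rather than deducing purity; your Stickelberger-valuation symmetry heuristic points in the right direction but, as stated, would wrongly exclude the very examples the conjecture is designed to accommodate.
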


The above conjecture remains open. On the construction side, semi-primitive Gauss sums have been quite useful for constructing strongly regular Cayley graphs. Here by {\it semi-primitive Gauss sums} $g(\chi)$ over $\F_{p^f}$, where the order of $\chi$ is $N$, we mean that there exists some positive integer $t$ such that $p^t\equiv -1\pmod N$. In such a situation, it is known that an arbitrary union of cyclotomic classes of order $N$ of $\F_{p^f}$ will give rise to an srg. We refer the reader to \cite{vanLintSch}, \cite{bmw}, \cite{bwx} and \cite{vanDamMu} for work in this direction.  Quite recently,   motivated by the examples of De Lange \cite{del} and Ikuta and Munemasa \cite{ikutam}, Feng and Xiang \cite{fengxiang} considered the problem of constructing strongly regular graphs $Cay(\mathbb{F}_{p^f},D)$, where $D$ is a union of at least two cyclotomic classes of order $N$ and it is assumed that a single cyclotomic class of order $N$ does not give rise to an srg. They succeeded in generalizing seven of the index 2 examples of cyclotomic srg in Table I into infinite families. The main tools used in \cite{fengxiang} are index 2 Gauss sums. We remark that even though the first example in Table I is an index 2 example (${\rm ord}_{11}(3)=5$), the construction in \cite{fengxiang} could not generalize it into an infinite family since ${\rm ord}_{11^m}(3)\neq \phi(11^m)/2$ when $m>1$.

In this paper, we use similar idea to construct strongly regular Cayley graphs. Our goal is to generalize the index 4 example in Table I. Naturally the main tools that we use are index 4 Gauss sums, which will be introduced Section 2. We obtain two infinite families of srg with new parameters. The first family generalizes the index 4 example listed in Table I, and it has parameters
$$v=7^{9\cdot 37^{m-1}}, \ k=\frac{v-1}{37}, \; r=\frac{9\cdot 7^{\frac{ 9\cdot 37^{m-1}  -1} {2} } -1}{37},\; {\rm and}\ s=\frac{-4 \cdot 7^{\frac{9\cdot 37^{m-1} +1} {2} } -1}{37},$$
where $m\geq 1$ is an integer. (Note that the $\lambda$ and $\mu$ values of the srg can be computed from $v,k,r$ and $s$.) The second family generalizes a (trivial) subfield example of cyclotomic srg, and it has parameters
$$v=3^{3\cdot 13^{m-1}},\ k=\frac{v-1}{13}, \; r=\frac{3^{ \frac{3\cdot 13^{m-1}+3 } {2}   }-1  }  {13},  \; {\rm and} \; s=\frac{-4\cdot 3^{ \frac{3\cdot 13^{m-1}-1 } {2}   }-1  }  {13},$$
where $m\geq 1$ is an integer.

\section{Index 4 Gauss sums}
Let $p$ be a prime, $f$ be a positive integer, and $q=p^f.$ Let $\mathbb{F}_q$ be the finite field of order $q$,\  $\zeta_p$ be a complex primitive $p$-th root of unity, and ${\rm Tr}_{q/p}$ be the trace from $\mathbb{F}_q$ to $\mathbb{F}_p$. The multiplicative characters of $\mathbb{F}_q$ are the homomorphisms from the multiplicative group $\mathbb{F}_q^*$ to the multiplicative group $\mathbb{C}^*$ of the complex field $\mathbb{C}$. On the other hand, the additive characters of $\mathbb{F}_q$ are the homomorphisms from the additive group $(\mathbb{F}_q,+)$ to $\mathbb{C}^*$, and they are given by
\begin{equation*}
\psi_a: \mathbb{F}_q \rightarrow \mathbb{C}^*, \quad \psi_a(x)=\zeta_p^{{\rm Tr}_{q/p}(ax)},
\end{equation*}
where $a\in \mathbb{F}_q$. We usually write $\psi_1$ simply as $\psi$, which is called the {\it canonical} additive character of $\mathbb{F}_q$.

Now let $\chi$ be a multiplicative character of $\mathbb{F}_q$. Define the Gauss sum by
\begin{equation*}
g(\chi)=\sum_{x \in \mathbb{F}_q^*} \chi(x)\psi(x).
\end{equation*}

We first list some basic properties of Gauss sums.

\begin{proposition}\label{basicprop}
{\em (Lemma 1.1 \cite{Feng})} (1) Let $\chi_0$ be the trivial multiplicative character of $\mathbb{F}_q$. Then $g(\chi_0)=-1$. Also $g(\chi)\overline{g(\chi)}=q$ for any $\chi\neq \chi_0$.

(2) Let $N|(q-1)$, $\chi$ be a multiplicative character of $\mathbb{F}_q$ of order $N$, and $\sigma_{a,b}\in Gal(\mathbb{Q}(\zeta_{N},\zeta_p)/\mathbb{Q})$ be such that $\sigma_{a,b}(\zeta_N)=\zeta_N^a$ and $\sigma_{a,b}(\zeta_p)=\zeta_p^b$. Then $\sigma_{a,b}(g(\chi))=\overline{\chi}^a(b)g(\chi^a)$. Also $\sigma_{p,1}(g(\chi))=g(\chi^p)=g(\chi)$.

\end{proposition}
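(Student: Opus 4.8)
The plan is to verify each assertion by direct manipulation of the defining sum $g(\chi)=\sum_{x\in\F_q^*}\chi(x)\psi(x)$, relying only on the orthogonality relations for additive and multiplicative characters and on the elementary Galois theory of $\Q(\zeta_N,\zeta_p)$.

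For part (1), I would first note that $\chi_0(x)=1$ for every $x\in\F_q^*$, so $g(\chi_0)=\sum_{x\in\F_q^*}\psi(x)$; since $\psi$ is a nontrivial additive character, its sum over all of $\F_q$ vanishes, and removing the $x=0$ term gives $g(\chi_0)=-\psi(0)=-1$. For the modulus, I would expand
$$g(\chi)\overline{g(\chi)}=\sum_{x,y\in\F_q^*}\chi(x)\overline{\chi(y)}\,\psi(x-y),$$
using $\overline{\psi(y)}=\psi(-y)$, then substitute $x=yz$ with $z\in\F_q^*$ and collect terms into $\sum_{z\in\F_q^*}\chi(z)\sum_{y\in\F_q^*}\psi(y(z-1))$, where the step $\chi(x)\overline{\chi(y)}=\chi(z)$ uses $|\chi(y)|=1$. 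The inner additive sum equals $q-1$ when $z=1$ and $-1$ otherwise; combining this with $\sum_{z\in\F_q^*}\chi(z)=0$ (valid because $\chi\neq\chi_0$) yields $g(\chi)\overline{g(\chi)}=q$. The only point demanding care here is the reindexing of the double sum and the case split in the inner sum.

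For part (2), I would apply $\sigma_{a,b}$ termwise to $g(\chi)=\sum_{x}\chi(x)\zeta_p^{{\rm Tr}_{q/p}(x)}$. Each value $\chi(x)$ is an $N$-th root of unity, so $\sigma_{a,b}(\chi(x))=\chi(x)^a=\chi^a(x)$; and since $b$ is a unit modulo $p$ and the trace is $\F_p$-linear, $\sigma_{a,b}(\zeta_p^{{\rm Tr}_{q/p}(x)})=\zeta_p^{b\,{\rm Tr}_{q/p}(x)}=\zeta_p^{{\rm Tr}_{q/p}(bx)}=\psi(bx)$. Hence $\sigma_{a,b}(g(\chi))=\sum_{x}\chi^a(x)\psi(bx)$, and the substitution $y=bx$ factors out $\chi^a(b^{-1})=\overline{\chi}^a(b)$, leaving $\overline{\chi}^a(b)\,g(\chi^a)$ as claimed. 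The delicate step is justifying $\sigma_{a,b}(\zeta_p^{{\rm Tr}_{q/p}(x)})=\psi(bx)$, which rests on reading $b$ as an element of $\F_p\subseteq\F_q$ and invoking the $\F_p$-linearity of the trace.

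Finally, specializing to $a=p$, $b=1$ gives $\sigma_{p,1}(g(\chi))=\overline{\chi}^p(1)g(\chi^p)=g(\chi^p)$. To see that $g(\chi^p)=g(\chi)$, I would use that $\chi^p(x)=\chi(x^p)$ together with the Frobenius-invariance of the trace, ${\rm Tr}_{q/p}(x)={\rm Tr}_{q/p}(x^p)$, which forces $\psi(x)=\psi(x^p)$; thus $g(\chi^p)=\sum_{x}\chi(x^p)\psi(x^p)$, and the substitution $y=x^p$, which permutes $\F_q^*$, gives $g(\chi^p)=\sum_{y}\chi(y)\psi(y)=g(\chi)$. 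I expect the whole argument to be routine: the main difficulty, such as it is, is bookkeeping rather than conceptual, so I would present the computations compactly.
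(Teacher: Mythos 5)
Your proof is correct and complete. The paper itself offers no proof of this proposition --- it is quoted verbatim as Lemma 1.1 of the cited work of Feng, Yang and Luo --- and your argument is exactly the standard one: orthogonality of characters for part (1) (with the correct reindexing $x=yz$ and the case split $z=1$ versus $z\neq 1$ in the inner additive sum), termwise application of $\sigma_{a,b}$ followed by the substitution $y=bx$ for part (2) (where you rightly flag that $b$ must be read as an element of $\F_p^*\subseteq\F_q^*$ and that $\F_p$-linearity of ${\rm Tr}_{q/p}$ is what justifies $\zeta_p^{b\,{\rm Tr}_{q/p}(x)}=\psi(bx)$), and Frobenius-invariance of the trace together with the bijectivity of $x\mapsto x^p$ on $\F_q^*$ for the final identity $g(\chi^p)=g(\chi)$.
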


For more properties of Gauss sums, we refer the reader to \cite{bew} and \cite{irero}. Gauss sums can be viewed as the Fourier coefficients of the Fourier expansion of the additive characters in terms of the multiplicative characters of $\mathbb{F}_q$. That is,
\begin{equation}\label{inversion}
\psi(a)= \frac{1}{q-1}\sum_{\chi \in \widehat{\mathbb{F}_q^*} } g(\bar{\chi})\chi(a), \quad \mbox{for all $a \in \mathbb{F}_q^*$},
\end{equation}
where $\bar{\chi}=\chi^{-1}$ and $\widehat{\mathbb{F}_q^*} $ denotes the character group of $\mathbb{F}_q^*$.

In this paper, we will need certain index 4 Gauss sums, which we define below.

Let $p$ be a prime, $N \geq 2$ such that $\gcd(p(p-1),N)=1$. Thus $p\in \mathbb{Z}_N^*$, the unit group of $\mathbb{Z}_N$. Furthermore, we assume that $-1 \not \in \langle p \rangle$ and the order of $p$ modulo $N$ is $f=\frac{\phi(N)}{4}$. It follows that $[\mathbb{Z}_N^* : \langle p \rangle ]=4$ and the decomposition field $K$ of $p$ in the cyclotomic field $\mathbb{Q}(\zeta_N)$ is a quartic abelian imaginary field. Let $\chi$ be a multiplicative character of $\mathbb{F}_q$ of order $N$. Then the Gauss sum $g(\chi)$ is called an {\it index 4 Gauss sum}. Note that since we assumed that $\gcd(N, p-1)=1$, we have $\chi(b)=1$ for any $b\in\mathbb{F}_p^*$, where $\chi\in \widehat{\mathbb{F}_q^*} $ has order $N$. It follows that $g(\chi)\in\mathbb{Z}[\zeta_N]$ by part (2) of Proposition~\ref{basicprop}.

Since $\gcd(p(p-1),N)=1$, $N$ must be odd. The assumption $[\mathbb{Z}_N^* : \langle p \rangle ]=4$ implies that $N$ has at most three distinct prime factors (cf. \cite{Feng}).  In fact, the authors of \cite{Feng} listed all possibilities of $N$ satisfying the above assumptions. In this paper, we are only concerned with one of these possibilities, namely, $N=p_1^m$, where $m$ is a positive integer, $p_1$ is an odd prime and $p_1 \equiv 5\ (\bmod \ 8)$. In this case, the decomposition field $K$ is the unique imaginary cyclic quartic subfield of $\mathbb{Q}(\zeta_N)$. In fact, $K$ is a subfield of $\mathbb{Q}(\zeta_{p_1})$. The Galois group $Gal(K/ \mathbb{Q})$ is canonically isomorphic to the group $\mathbb{Z}_N^*/ \langle p \rangle$. Henceforth, we often identify these two groups. We can choose a primitive element $g$ modulo $p_1$ such that $g$ is also a primitive element modulo $N=p_1^m$ (cf. \cite[p.~43]{irero}). Let $\sigma: \zeta_N\mapsto\zeta_N^g$. Then $\sigma$ is a generator of $Gal(\mathbb{Q}(\zeta_N)/\mathbb{Q})$ and its restriction to $K$ is a generator of  $Gal(K/ \mathbb{Q})\cong \mathbb{Z}_N^*/ \langle p \rangle\cong \mathbb{Z}_{p_1}^*/ \langle p \rangle$. By the choice of $g$ and the index 4 assumption we have $\mathbb{Z}_{p_1}^*= \langle p\rangle  \cup g \langle p\rangle \cup g^2\langle p\rangle \cup g^3 \langle p\rangle.$  We will use the following notation.
\\

$\tilde{C_j}= g^j \langle p \rangle \subseteq \mathbb{Z}_{p_1}^* \quad (0\leq j \leq 3); $
\\

$\tilde{f}=\frac{\phi(p_1)}{4}=\frac{p_1-1}{4};$
\\

$b_j= \frac{1}{p_1} \sum _{z \in ([1,p_1-1]\cap \tilde{C}_j)} z \quad (0\leq j \leq 3)$, where $[1,p_1-1]$ denotes the set of integers $x$, $1\leq x\leq p_1-1$;
\\

$b$= min$\{b_0, b_1, b_2, b_3 \} =b_\lambda$ for some $\lambda \in \{0,1,2,3\}$;
\\

$c$=min$\{b_{\lambda+1}-b, b_{\lambda+3}-b\}$, where the subscripts are read modulo 4;
\\

$\eta_j= \sum_{a\in \tilde{C}_j } \zeta _{p_1}^a \quad (0\leq j \leq 3),$ where $\zeta_{p_1}$ is a complex primitive $p_1$-th root of unity.

\begin{lemma}\label{integralbasis}{\em (\cite{Feng})} With the above assumptions and notation $\{\eta_j \mid  0\leq j \leq 3\}$ is an integral basis of K, and $\eta_j =\sigma^j(\eta_0)$, where $\sigma(\zeta_{p_1})=\zeta_{p_1}^g$. The equation $p_1=X^2+Y^2$ has a unique integer solution $(A,B)$ such that $A\equiv 3\ (\bmod \ 4)$. Furthermore,
\begin{eqnarray*}
4\eta_0, 4\eta_2=(-1+\sqrt{p_1})\pm i\sqrt{2}[p_1-A\sqrt{p_1}]^{\frac{1}{2}}, \\
4\eta_1, 4\eta_3=(-1-\sqrt{p_1})\pm i\sqrt{2}[p_1+A\sqrt{p_1}]^{\frac{1}{2}}.
\end{eqnarray*}
\end{lemma}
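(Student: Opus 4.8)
The plan is to establish the four assertions in turn, disposing of the integral-basis and Galois statements first (these are formal), then the two-squares normalization, and finally the explicit evaluation of the periods, where the real work lies.

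\emph{Integral basis and Galois action.} First I would use that $K$ is the fixed field, inside $\mathbb{Q}(\zeta_{p_1})$, of the index-$4$ subgroup $H=\langle p\rangle\leq \mathbb{Z}_{p_1}^*\cong Gal(\mathbb{Q}(\zeta_{p_1})/\mathbb{Q})$, together with the classical fact $\mathcal{O}_{\mathbb{Q}(\zeta_{p_1})}=\mathbb{Z}[\zeta_{p_1}]$, which has the $\mathbb{Z}$-basis $\{\zeta_{p_1}^a : 1\leq a\leq p_1-1\}$ (a unimodular change from the power basis $\{1,\zeta_{p_1},\dots,\zeta_{p_1}^{p_1-2}\}$). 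Then $\mathcal{O}_K=\mathbb{Z}[\zeta_{p_1}]\cap K=\mathbb{Z}[\zeta_{p_1}]^{H}$. Since $H$ permutes the basis $\{\zeta_{p_1}^a\}$ by $\zeta_{p_1}^a\mapsto\zeta_{p_1}^{pa}$ with orbits exactly the cosets $\tilde{C}_j$, an element of $\mathbb{Z}[\zeta_{p_1}]$ is $H$-fixed if and only if its coefficient function is constant on each $\tilde{C}_j$, that is, if and only if it lies in $\sum_{j}\mathbb{Z}\eta_j$. Hence $\{\eta_0,\dots,\eta_3\}$ is a $\mathbb{Z}$-basis of $\mathcal{O}_K$, in particular an integral basis and a fortiori $\mathbb{Q}$-linearly independent; and $\eta_j=\sigma^j(\eta_0)$ is immediate from $\sigma(\zeta_{p_1})=\zeta_{p_1}^{g}$ together with $g\cdot g^{j}\langle p\rangle=g^{j+1}\langle p\rangle$.

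\emph{The representation $p_1=A^2+B^2$.} Since $p_1\equiv 5\pmod 8$ we have $p_1\equiv 1\pmod 4$, so by Fermat's two-squares theorem $p_1=X^2+Y^2$ has a representation unique up to order and signs; exactly one summand is odd, and negating it if necessary forces the odd value to be $\equiv 3\pmod 4$, which determines $A$ uniquely ($B$ is then fixed up to a sign, which is all the formulas require).

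\emph{Explicit evaluation of the periods.} This is the heart. I would introduce a multiplicative character $\chi$ of $\mathbb{F}_{p_1}$ of order $4$ with $\chi(g)=i$; then $\chi$ is constant on each $\tilde{C}_j$, so $g(\chi)=\sum_{j=0}^{3} i^{j}\eta_j$. Reading off the square, $g(\chi^2)=\sum_j(-1)^j\eta_j=(\eta_0+\eta_2)-(\eta_1+\eta_3)$ is the quadratic Gauss sum, equal to $+\sqrt{p_1}$ because $p_1\equiv 1\pmod4$; combined with $\sum_j\eta_j=-1$ this gives $\eta_0+\eta_2=\frac{-1+\sqrt{p_1}}{2}$ and $\eta_1+\eta_3=\frac{-1-\sqrt{p_1}}{2}$, which are exactly the real parts appearing in the claim. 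Next, $p_1\equiv 5\pmod8$ forces $-1\in\tilde{C}_2$, so complex conjugation sends $\eta_j\mapsto\eta_{j+2}$; hence $\eta_0-\eta_2$ and $\eta_1-\eta_3$ are purely imaginary, say $iu$ and $iw$ with $u,w$ real, and $g(\chi)=-w+iu$. From $g(\chi)\overline{g(\chi)}=p_1$ I get $u^2+w^2=p_1$. Finally the Gauss--Jacobi relation $g(\chi)^2=J(\chi,\chi)\,g(\chi^2)=J(\chi,\chi)\sqrt{p_1}$, compared on real parts, yields $w^2-u^2=\mathrm{Re}(J(\chi,\chi))\sqrt{p_1}$. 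Invoking the classical evaluation $J(\chi,\chi)=a+bi$ with $a$ odd, $a\equiv 3\pmod4$, $b$ even, and $a^2+b^2=p_1$ identifies $a=A$, whence $w^2-u^2=A\sqrt{p_1}$. Solving with $u^2+w^2=p_1$ gives $u^2=\frac{p_1-A\sqrt{p_1}}{2}$ and $w^2=\frac{p_1+A\sqrt{p_1}}{2}$, and substituting the values of $\eta_0\pm\eta_2$ and $\eta_1\pm\eta_3$ into $\eta_j=\frac{(\eta_j+\eta_{j+2})+(\eta_j-\eta_{j+2})}{2}$ reproduces the displayed formulas.

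\emph{Main obstacle.} The descent and two-squares steps are routine. The genuinely delicate point is the last one: showing that $\mathrm{Re}(J(\chi,\chi))$ is the specific signed integer $A\equiv 3\pmod4$ — not merely $\pm A$ — and hence that $[p_1-A\sqrt{p_1}]^{1/2}$ attaches to the pair $\{\eta_0,\eta_2\}$ while $[p_1+A\sqrt{p_1}]^{1/2}$ attaches to $\{\eta_1,\eta_3\}$. This is exactly the classical congruence for the quartic Jacobi sum modulo $4$, here in the case $\chi(-1)=-1$ reflecting $p_1\equiv 5\pmod8$; the magnitudes $u^2,w^2$ fall out formally from $u^2+w^2=p_1$ and $w^2-u^2=\mathrm{Re}(J(\chi,\chi))\sqrt{p_1}$, but fixing the sign of $\mathrm{Re}(J(\chi,\chi))$ is where care is required.
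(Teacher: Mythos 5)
Your proof is essentially correct, but note first that the paper contains no proof of this lemma at all: it is imported verbatim from Feng--Yang--Luo \cite{Feng}, so what you have produced is a self-contained reconstruction of the cited result rather than an alternative to an argument in the text. Your route is the classical evaluation of the quartic Gauss periods, and each step checks out: the descent $\mathcal{O}_K=\mathbb{Z}[\zeta_{p_1}]^{\langle p\rangle}$ with the permuted basis $\{\zeta_{p_1}^a\,:\,1\leq a\leq p_1-1\}$ is the standard proof that periods form an integral basis (valid here because $\langle p\rangle$ is the unique index-$4$ subgroup of the cyclic group $\mathbb{Z}_{p_1}^*$, so its orbits are exactly the cosets $\tilde{C}_j$ and $K$ is exactly its fixed field); the relations $\sum_j\eta_j=-1$ and $g(\chi^2)=\sqrt{p_1}$ pin down $\eta_0+\eta_2$ and $\eta_1+\eta_3$; the observation that $p_1\equiv 5\pmod 8$ puts $-1\in\tilde{C}_2$, making $\eta_0-\eta_2$ and $\eta_1-\eta_3$ purely imaginary, is correct (since $(p_1-1)/2\equiv 2\pmod 4$); and the system $u^2+w^2=p_1$, $w^2-u^2=\mathrm{Re}\big(J(\chi,\chi)\big)\sqrt{p_1}$ coming from $|g(\chi)|^2=p_1$ and $g(\chi)^2=J(\chi,\chi)g(\chi^2)$ (legitimate since $\chi^2$ is nontrivial) does yield the displayed formulas once $\mathrm{Re}\,J(\chi,\chi)\equiv 3\pmod 4$ is known. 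You correctly identified that last congruence as the only non-formal input; it is the classical normalization of the quartic Jacobi sum (Ireland--Rosen \cite{irero}, which is already in the paper's bibliography), valid for all $p_1\equiv 1\pmod 4$, so invoking it is fair.

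Two small points you should make explicit. First, the sign $g(\chi^2)=+\sqrt{p_1}$ --- and hence the claim that the pair $\{\eta_0,\eta_2\}$, rather than $\{\eta_1,\eta_3\}$, carries the real part $-1+\sqrt{p_1}$ --- is Gauss's sign theorem and requires the normalization $\zeta_{p_1}=e^{2\pi i/p_1}$; for an arbitrary primitive $p_1$-th root of unity the statement holds only after relabeling the periods, which is consistent with the paper's own subsequent ``without loss of generality'' choice of $4\eta_0$ and $4\eta_1$. Second, as you yourself note, $(A,B)$ is unique only up to the sign of $B$; this is harmless for the displayed formulas, which involve only $A$, but $B$ does enter the second equation of (\ref{Mequations}), and the paper accordingly carries both signs ($B=\pm 6$, $B=\pm 2$) through its examples.
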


Below let $\chi$ be a multiplicative character of $\mathbb{F}_q$ of order $N$.
\begin{theorem}\label{divisibility}{\em (\cite{Feng})}
 Under the above assumptions, we have $p^{-\frac{f-\tilde{f}}{2}-b}g(\chi) \in O_K$ (the integer ring of $K$).
\end{theorem}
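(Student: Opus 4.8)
The plan is to translate the statement into a computation of $\mathfrak{p}$-adic valuations and then evaluate those valuations by Stickelberger's theorem. First I would note that $g(\chi)$ already lies in $O_K$: we have $g(\chi)\in\Z[\zeta_N]$ by the remark following Proposition~\ref{basicprop}, and by part (2) of that proposition $\sigma_{p,1}(g(\chi))=g(\chi^p)=g(\chi)$, so $g(\chi)$ is fixed by the decomposition group $\langle p\rangle\leq\Z_N^*$, whose fixed field in $\Q(\zeta_N)$ is exactly $K$; hence $g(\chi)\in K\cap\overline{\Z}=O_K$. Since $p$ splits completely in $K$, write $pO_K=\mathcal{P}_0\mathcal{P}_1\mathcal{P}_2\mathcal{P}_3$ with $\mathcal{P}_j=\sigma^j(\mathcal{P}_0)$ and $v_{\mathcal{P}_j}(p)=1$. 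Then $p^{-\frac{f-\tilde f}{2}-b}g(\chi)\in O_K$ is equivalent to the assertion $v_{\mathcal{P}_j}(g(\chi))\geq\frac{f-\tilde f}{2}+b$ for every $j$.

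Next I would evaluate each valuation. By Stickelberger's theorem in its fractional-part form, for a fixed prime of $\Q(\zeta_N,\zeta_p)$ one has $v_{\mathcal{P}_0}(g(\chi))=\sum_{i=0}^{f-1}\{p^i/N\}$ (fractional parts), and applying the powers of $\sigma$, which permute the four primes and twist the character exponent by the corresponding power of $g$, gives $v_{\mathcal{P}_j}(g(\chi))=\sum_{i=0}^{f-1}\{g^jp^i/N\}$ for a suitable matching of indices. Since $\langle p\rangle$ has order $f$ in $\Z_N^*$ and $g$ is a primitive root modulo $N$, the residues $g^jp^i$ ($0\leq i<f$) run exactly over the coset $C_j:=g^j\langle p\rangle\subseteq\Z_N^*$, so, taking representatives in $[1,N-1]$,
$$v_{\mathcal{P}_j}(g(\chi))=\frac1N\sum_{z\in C_j}z=:B_j.$$

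The heart of the argument is then to compare the ``order $N=p_1^m$'' sums $B_j$ with the ``order $p_1$'' sums $b_j$. The reduction $\Z_N^*\to\Z_{p_1}^*$ maps $C_j$ onto $\tilde{C}_j$; because $\tilde f=(p_1-1)/4$ is prime to $p_1$, the Sylow $p_1$-subgroup of the cyclic group $\Z_N^*$, which is the kernel of reduction, is contained in $\langle p\rangle$, so $C_j$ is the \emph{full} preimage of $\tilde{C}_j$ and each $r\in\tilde{C}_j$ lifts to the $p_1^{m-1}$ integers $r+p_1t$, $0\leq t<p_1^{m-1}$. Summing these lifts and using $\sum_{r\in\tilde{C}_j}r=p_1b_j$ and $|\tilde{C}_j|=\tilde f$, a direct calculation gives
$$B_j=b_j+\frac{(p_1^{m-1}-1)\tilde f}{2}=b_j+\frac{f-\tilde f}{2},$$
since $f=p_1^{m-1}\tilde f$. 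Taking the minimum over $j$ yields $\min_j v_{\mathcal{P}_j}(g(\chi))=\frac{f-\tilde f}{2}+b$, which is exactly the claimed divisibility.

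The main obstacle is purely bookkeeping: pinning down Stickelberger's theorem in precisely the right normalization, in particular matching the four primes $\mathcal{P}_j$ with the four cosets $C_j$ (the correct sign and index of the Galois twist, and the passage between the digit-sum form with $v(p)=p-1$ over $\Q(\zeta_N,\zeta_p)$ and the fractional-part form with $v(p)=1$ over $K$). Crucially, the final conclusion is insensitive to this matching because it only uses the minimum over all $j$. As an internal consistency check on the normalization, the relation $g(\chi)\overline{g(\chi)}=q$ from Proposition~\ref{basicprop}(1) forces $v_{\mathcal{P}_j}(g(\chi))+v_{\mathcal{P}_{j+2}}(g(\chi))=f$, which matches $B_j+B_{j+2}=(f-\tilde f)+(b_j+b_{j+2})=f$ once one checks $b_j+b_{j+2}=\tilde f$; the latter holds because $-1\in g^2\langle p\rangle$, so $z\mapsto p_1-z$ pairs $\tilde{C}_j$ with $\tilde{C}_{j+2}$.
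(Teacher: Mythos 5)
Your proof is correct, and it is essentially the argument behind the cited result: the paper itself gives no proof of this theorem, quoting it from \cite{Feng}, where it is likewise derived from Stickelberger's theorem on the prime-ideal factorization of Gauss sums. All of your steps check out --- invariance under $\sigma_{p,1}$ places $g(\chi)$ in $O_K$, the complete splitting of $p$ in the decomposition field reduces the claim to $v_{\mathcal{P}_j}(g(\chi))\geq \frac{f-\tilde{f}}{2}+b$ for all $j$, and your lifting computation $B_j=b_j+\frac{f-\tilde{f}}{2}$ (valid because the kernel of $\mathbb{Z}_N^*\to\mathbb{Z}_{p_1}^*$ has order $p_1^{m-1}$ dividing $f$ and hence lies in $\langle p\rangle$, so each $C_j$ is a full preimage of $\tilde{C}_j$) yields the bound with equality at the minimizing coset, the index-matching ambiguity in Stickelberger's theorem being harmless since only the minimum over $j$ is used.
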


By Lemma~\ref{integralbasis}, we now write $p^{-\frac{f-\tilde{f}}{2}-b}g(\chi)$ as
\begin{equation*}
p^{-\frac{f-\tilde{f}}{2}-b}g(\chi)=N_0 \eta_0 +N_1 \eta_1+ N_2\eta _2 +N_3 \eta_3, \;  N_i\in\mathbb{Z},\; \forall i.
\end{equation*}
Without loss of generality we assume that
\begin{eqnarray*}
4\eta_0=(-1+\sqrt{p_1})+ i\sqrt{2}[p_1-A\sqrt{p_1}]^{\frac{1}{2}}=4{\overline\eta_2}, \\
4\eta_1=(-1-\sqrt{p_1})+ i\sqrt{2}[p_1+A\sqrt{p_1}]^{\frac{1}{2}}=4{\overline \eta_3}.
\end{eqnarray*}
Then
\begin{equation}\label{expressionforg1}
\begin{array}{lll}
 4 p^{-\frac{f-\tilde{f}}{2}-b}g(\chi) & =-(N_0+N_1+N_2+N_3)+(N_0-N_1+N_2-N_3)\sqrt{p_1}\ + \\
 & i\sqrt{2}[(N_0-N_2)(p_1-A\sqrt{p_1})^{\frac{1}{2}}+ (N_1-N_3)(p_1+A\sqrt{p_1})^{\frac{1}{2}}].
\end{array}
\end{equation}
We make the following transformation:
\begin{equation*}
\left\{\begin{array}{lll}
     M_0=N_0+N_1+N_2+N_3,\\
     M_1=N_0+N_1-N_2-N_3,\\
     M_2=N_0-N_1+N_2-N_3,\\
     M_3=N_0-N_1-N_2+N_3,\\
                 \end{array}
                 \right.
  \quad
  \left\{\begin{array}{lll}
     4N_0=M_0+M_1+M_2+M_3,\\
     4N_1=M_0+M_1-M_2-M_3,\\
     4N_2=M_0-M_1+M_2-M_3,\\
     4N_3=M_0-M_1-M_2+M_3.\\
                 \end{array}
                 \right.
\end{equation*}
Then
\begin{equation}\label{expressionforg2}
\begin{array}{lll}
 4 p^{-\frac{f-\tilde{f}}{2}-b}g(\chi) & =-M_0+M_2\sqrt{p_1}+\\
& i\sqrt{2}[\frac{M_1+M_3}{2}(p_1-A\sqrt{p_1})^{\frac{1}{2}}+\frac{M_1-M_3}{2}(p_1+A\sqrt{p_1})^{\frac{1}{2}}].
\end{array}
\end{equation}

\begin{theorem}{\em (\cite{Feng})}
The integers $M_0,M_1,M_2,M_3$ defined above satisfy the following conditions:
\begin{equation*}
\left \{\begin{array}{lll}
    16 p^{\tilde{f}-2b}=M_0^2+p_1(M_1^2+M_2^2+M_3^2),\\
    2M_0M_2+2AM_1M_3=B(M_1^2-M_3^2),\\
    M_0+M_1+M_2+M_3 \equiv 0 \ (\bmod \ 4),\\
     M_1 \equiv M_2 \equiv M_3 \ (\bmod \ 2),\\
     M_0\equiv 4p^{-b}\ (\bmod p_1). \\
          \end{array} \right.
\end{equation*}
\end{theorem}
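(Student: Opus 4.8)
The plan is to separate the five assertions into three groups: the two algebraic identities come from the norm relation $g(\chi)\overline{g(\chi)}=q$, the mod $4$ and mod $2$ congruences are immediate from the linear change of variables, and the congruence modulo $p_1$ — the only substantive point — is obtained by reducing the defining identity for $g(\chi)$ modulo a prime of $K$ lying over $p_1$.

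For the first two identities I would set $\alpha=p^{-\frac{f-\tilde{f}}{2}-b}g(\chi)$, so that $4\alpha$ equals the right-hand side of \eqref{expressionforg2}. By Proposition~\ref{basicprop}, $g(\chi)\overline{g(\chi)}=p^f$, hence
\[
|4\alpha|^2=16\,\alpha\overline{\alpha}=16\,p^{-(f-\tilde{f})-2b}\cdot p^f=16\,p^{\tilde{f}-2b},
\]
a rational number. I would then compute $|4\alpha|^2=(\mathrm{Re}\,4\alpha)^2+(\mathrm{Im}\,4\alpha)^2$ directly from \eqref{expressionforg2}. Writing $u=(p_1-A\sqrt{p_1})^{1/2}$ and $v=(p_1+A\sqrt{p_1})^{1/2}$, the decisive simplification is
\[
uv=\sqrt{p_1^2-A^2p_1}=\sqrt{p_1}\,\sqrt{p_1-A^2}=B\sqrt{p_1},
\]
using $p_1=A^2+B^2$ from Lemma~\ref{integralbasis} (with the normalization $B>0$). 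Expanding and collecting terms, $|4\alpha|^2$ splits into a rational part equal to $M_0^2+p_1(M_1^2+M_2^2+M_3^2)$ plus $\sqrt{p_1}$ times $-2M_0M_2-2AM_1M_3+B(M_1^2-M_3^2)$. Since $|4\alpha|^2$ is rational while $\sqrt{p_1}\notin\mathbb{Q}$, the coefficient of $\sqrt{p_1}$ must vanish, which is exactly the second identity $2M_0M_2+2AM_1M_3=B(M_1^2-M_3^2)$; the rational part then gives the first identity $16\,p^{\tilde{f}-2b}=M_0^2+p_1(M_1^2+M_2^2+M_3^2)$.

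The two parity/divisibility congruences I would read straight off the displayed change of variables: since $4N_0=M_0+M_1+M_2+M_3$ and $N_0\in\mathbb{Z}$, we get $M_0+M_1+M_2+M_3\equiv 0\pmod 4$; and since $M_1-M_2=2(N_1-N_2)$ and $M_2-M_3=2(N_2-N_3)$, we get $M_1\equiv M_2\equiv M_3\pmod 2$.

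The congruence modulo $p_1$ is the heart of the matter, and I expect it to be the main obstacle; the key idea is to reduce modulo a prime $\mathfrak{P}$ lying over $p_1$ (not over $p$), where both the periods and the Gauss sum degenerate. Starting from the identity $p^{-\frac{f-\tilde{f}}{2}-b}g(\chi)=N_0\eta_0+N_1\eta_1+N_2\eta_2+N_3\eta_3$ of Theorem~\ref{divisibility}, I note that $K\subseteq\mathbb{Q}(\zeta_{p_1})$ and $p_1$ is totally ramified there, so $\mathfrak{P}\cap O_K$ has residue field $\mathbb{F}_{p_1}$ and $\zeta_{p_1}\equiv 1\pmod{\mathfrak{P}}$; hence $\eta_j\equiv\tilde{f}\pmod{\mathfrak{P}}$ for each $j$ and the right-hand side reduces to $\tilde{f}\,M_0$. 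On the left, every value $\chi(x)$ is a $p_1^m$-th root of unity, so $\chi(x)\equiv 1\pmod{\mathfrak{P}}$, and therefore
\[
g(\chi)=\sum_{x\in\mathbb{F}_q^*}\chi(x)\psi(x)\equiv\sum_{x\in\mathbb{F}_q^*}\psi(x)=-1\pmod{\mathfrak{P}}.
\]
Thus $\tilde{f}\,M_0\equiv -p^{-\frac{f-\tilde{f}}{2}-b}\pmod{p_1}$; since $\tilde{f}=\frac{p_1-1}{4}\equiv -4^{-1}\pmod{p_1}$, this rearranges to $M_0\equiv 4\,p^{-\frac{f-\tilde{f}}{2}-b}\pmod{p_1}$. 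Finally, the order of $p$ modulo $p_1$ equals $\tilde{f}$, and $\frac{f-\tilde{f}}{2}=\tilde{f}\cdot\frac{p_1^{m-1}-1}{2}$ is an integral multiple of $\tilde{f}$, so $p^{\frac{f-\tilde{f}}{2}}\equiv 1\pmod{p_1}$ and the claimed congruence $M_0\equiv 4\,p^{-b}\pmod{p_1}$ follows. The only genuinely delicate point is this last step: passing to the prime over $p_1$ so that the characters collapse and $g(\chi)\equiv -1$, after which the remaining work is the elementary check $\mathrm{ord}_{p_1}(p)=\tilde{f}\mid\frac{f-\tilde{f}}{2}$.
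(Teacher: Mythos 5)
Your proposal is correct, but note that the paper itself gives no proof of this theorem --- it is quoted from the index-4 Gauss sum paper of Feng--Yang--Luo \cite{Feng} --- and your argument is essentially the standard derivation found there: the two Diophantine identities come from $g(\chi)\overline{g(\chi)}=q$ applied to \eqref{expressionforg2}, using $uv=B\sqrt{p_1}$ and the linear independence of $1,\sqrt{p_1}$ over $\mathbb{Q}$; the mod-$4$ and mod-$2$ congruences are read off the change of variables ($4N_0=M_0+M_1+M_2+M_3$, $M_1-M_2=2(N_1-N_2)$, $M_2-M_3=2(N_2-N_3)$); and the congruence $M_0\equiv 4p^{-b}\pmod{p_1}$ follows by reduction at a prime above the totally ramified $p_1$, where every $N$-th root of unity collapses to $1$, so $g(\chi)\equiv -1$ and each $\eta_j\equiv\tilde{f}\equiv -4^{-1}\pmod{p_1}$, with the final cleanup using $\mathrm{ord}_{p_1}(p)=\tilde{f}$ (which holds because $\langle p\rangle$, having index $4$ prime to $p_1$ in the cyclic group $\mathbb{Z}_N^*$, contains the kernel of $\mathbb{Z}_N^*\to\mathbb{Z}_{p_1}^*$) and $\tilde{f}\mid\frac{f-\tilde{f}}{2}=\tilde{f}\cdot\frac{p_1^{m-1}-1}{2}$. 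The single point worth flagging is that $A\equiv 3\pmod 4$ pins down $A$ but not the sign of $B$, so your normalization $B>0$ (equivalently, the choice of which $\eta_j$ carries which sign in Lemma~\ref{integralbasis}) is a genuine convention; this is consistent with the paper, which in its examples deliberately works with $B=\pm 6$ and $B=\pm 2$.
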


\section{Cyclotomic classes and strongly regular Cayley graphs}

Let $q=p^f$ be a prime power, and $\gamma$ be a fixed primitive element of $\mathbb{F}_q$. Let $N>1 $ be a divisor of $q-1$. Then the $N$-th cyclotomic classes $C_0,C_1,\ldots, C_{N-1}$ are defined by
\begin{equation*}  C_i= \{\gamma^{i+jN}\mid 0 \leq j \leq \frac{q-1}{N}-1\},\end{equation*}
where $0 \leq i \leq N-1$.

Note that $C_0$ consists of all the $N$-th powers in $\mathbb{F}_q^*$. Therefore $C_0$ does not depend on the choice of $\gamma$. The other classes $C_i$, $1\leq i\leq N-1$, do depend on the choice of $\gamma$. As usual, let $\psi$ be the canonical additive character of $\mathbb{F}_q$. The $N$-th {\it cyclotomic periods} (also called {\it Gauss periods}) are defined by
\begin{equation*}
\tau_a=\sum_{x \in C_a }\psi(x),
\end{equation*}
where $0 \leq a \leq N-1$.

Now using (\ref{inversion}), we have
\begin{eqnarray*}
\tau_a  & = &\sum_{x \in C_0}\psi(\gamma^a x)\\
& = &\sum_{x \in C_0} \frac{1}{q-1}\sum_{\chi \in \widehat{\mathbb{F}_q^*} } g(\bar{\chi}) \chi(\gamma^a x)\\
& =&\frac{1}{(q-1)}\sum_{\chi \in \widehat{\mathbb{F}_q^*} } g(\bar{\chi})\chi(\gamma^a)\sum_{x \in C_0} \chi(x)\\
& =&\frac{1}{N}\sum_{\chi \in C_0^\perp
}g(\bar{\chi})\chi(\gamma^a),
\end{eqnarray*}
where $C_0^\perp$ is the subgroup of $\widehat{\mathbb{F}_q^*}$ consisting of all characters $\chi$ which are trivial on $C_0$, i.e. $C_0^{\perp}$ the unique subgroup of $\widehat{\mathbb{F}_q^*}$ of order $N$. The above computations give the relationship between Gauss periods and Gauss sums.

Assume that $N=p_1^m$, where $p_1$ is an odd prime and $p_1 \equiv 5\ (\bmod \ 8)$, and $p_1>5$. Let $p\neq p_1$ be a prime such that $[\mathbb{Z}_N^*: \langle p \rangle ]=4$. It follows that $\gcd(p-1, p_1)=1$. (This can be seen as follows. If $p\equiv 1\pmod {p_1}$, then by using Lemma 3 of \cite[p.~42]{irero} repeatedly, we obtain that $p^{p_1^{m-1}}\equiv 1\pmod {p_1^m}$, contradicting the assumptions that ${\rm ord}_{p_1^m}(p)=\frac{p_1^{m-1}(p_1-1)}{4}$ and $p_1>5$.) Therefore we have $\gcd(p(p-1), N)=1$. Define $f=ord_N (p)= \frac{1}{4} \phi(N) $ and $\ q=p^f$. Let $C_0,C_1,\ldots, C_{N-1}$ be the $N$-th cyclotomic classes of $\mathbb{F}_q$. Define
\begin{equation}\label{defD}
D=\cup_{i=0}^{p_1^{m-1}-1}C_i.
\end{equation}

Using $D$ as connection set, we construct the Cayley graph $Cay(\mathbb{F}_q, D).$
\begin{theorem}\label{main}
The Cayley graph $Cay(\mathbb{F}_q, D)$ is an undirected, simple, regular graph of valency $|D|$, and it has at most five distinct restricted eigenvalues.
\end{theorem}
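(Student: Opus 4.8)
The plan is to compute the restricted eigenvalues of $Cay(\F_q, D)$ directly and show there are at most five distinct values among them. Since $Cay(\F_q,D)$ is a Cayley graph on the abelian group $(\F_q,+)$, its eigenvalues are exactly the character sums $\psi_a(D)=\sum_{x\in D}\psi(ax)$ as $a$ ranges over $\F_q$. The value at $a=0$ gives the valency $|D|$ (the unrestricted eigenvalue attached to the all-one vector), so the restricted eigenvalues are precisely the values $\psi_a(D)$ for $a\in\F_q^*$. Because $D$ is a union of cyclotomic classes of order $N$, and in fact $D=\cup_{i=0}^{p_1^{m-1}-1}C_i$ is a union of Gauss periods, the value $\psi_a(D)$ depends only on the coset $aC_0$, i.e.\ only on which cyclotomic class $a$ lies in. Thus I would first reduce the problem to evaluating $\psi_{\ga^t}(D)=\sum_{i=0}^{p_1^{m-1}-1}\tau_{t+i}$ for $0\le t\le N-1$, a sum of consecutive Gauss periods.

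Next I would convert these period sums into Gauss sums using the relation derived just above the statement, namely $\tau_a=\frac{1}{N}\sum_{\chi\in C_0^\perp}g(\bar\chi)\chi(\ga^a)$. Summing over the $p_1^{m-1}$ consecutive classes, the inner sum over $\chi$ factors: characters $\chi\in C_0^\perp$ whose order divides $p_1^{m-1}$ contribute the trivial-geometric-sum part, while the contribution of the others is governed by a geometric series $\sum_{i=0}^{p_1^{m-1}-1}\chi(\ga^i)$, which vanishes unless $\chi$ has order dividing $p_1^{m-1}$. The key point is that this collapses the full sum over the order-$N$ character group down to a sum over characters of order exactly $N=p_1^m$ (together with a small, easily-handled lower-order remainder), and for those characters $g(\bar\chi)$ is an index $4$ Gauss sum of the type analyzed in Section~2. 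So after this reduction, $\psi_{\ga^t}(D)$ becomes, up to an explicit rational constant, a single value of an index $4$ Gauss sum $g(\chi)$ twisted by a root of unity $\chi(\ga^t)$.

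The crux is then to show that as $t$ varies, these twisted index $4$ Gauss sums take at most five distinct values. Here I would use the explicit description of $g(\chi)$ from Section~2: by Theorem~\ref{divisibility} and equation~(\ref{expressionforg2}), the normalized Gauss sum $4p^{-\frac{f-\tilde f}{2}-b}g(\chi)$ lies in $O_K$ and is expressed in the integral basis $\{\eta_j\}$ of the quartic field $K$, with $\eta_j=\sigma^j(\eta_0)$. Since $Gal(K/\Q)\cong\Z_N^*/\la p\ra$ is cyclic of order $4$, the Galois conjugates of $g(\chi)$ — which is what the twists $\chi(\ga^t)g(\chi^{a})$ amount to via Proposition~\ref{basicprop}(2) — cycle through only finitely many algebraic values. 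Concretely, the $\sigma$-orbit structure forces $\psi_{\ga^t}(D)$ to depend on $t$ only through $t\bmod 4$ after accounting for which Galois conjugate appears, giving four values from the nontrivial characters; the residual contribution of the characters of order dividing $p_1^{m-1}$ (which for those $a$ with $a\in C_0^{(p_1^{m-1})}$ behaves separately) supplies at most one more, for a total of at most five.

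The main obstacle I anticipate is the bookkeeping in the second step: correctly isolating which characters survive the geometric-sum collapse, tracking the exact rational normalizing factor $\frac{1}{N}p^{\frac{f-\tilde f}{2}+b}$, and ensuring the Galois action on $\zeta_N$ via $\sigma$ lines up consistently with the action on $\zeta_p$ so that the four conjugate Gauss-sum values are genuinely distinct (or coincide) as predicted. In other words, the arithmetic is routine once set up, but the delicate part is verifying that the twists $\chi(\ga^t)$ interact with the Galois orbit of $g(\chi)$ in exactly the way needed to cap the count at five, rather than producing a larger number of eigenvalues.
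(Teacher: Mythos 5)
Your overall route is the same as the paper's (restricted eigenvalues as character sums $\psi(\ga^a D)$, reduction to sums of consecutive Gauss periods, conversion to Gauss sums via the inversion formula, elimination of the intermediate-order characters), but there is a genuine gap at the central step. After the first collapse you assert that $\psi(\ga^t D)$ becomes, up to a rational constant, ``a single value of an index $4$ Gauss sum twisted by a root of unity $\chi(\ga^t)$.'' That is false, and it cannot give the bound: twists by $N$-th roots of unity would a priori produce up to $N$ distinct values, and nothing in your setup justifies the later claim that the dependence is ``only through $t \bmod 4$.'' What actually survives is the full sum $S_a=\sum_{t\in\Z_{p_1^m}^*}g(\bar{\theta}^t)\sum_{i=0}^{p_1^{m-1}-1}\theta^t(\ga^{a+i})$ over all $\phi(p_1^m)$ generators of $C_0^\perp$. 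The paper's key move, missing from your proposal, is a \emph{second} collapse: write $t=t_1+p_1t_2$; since the normalized Gauss sum lies in $O_K\subseteq\mathbb{Q}(\zeta_{p_1})$ (Theorem~\ref{divisibility}), $g(\bar{\theta}^t)=g(\bar{\theta}^{t_1})$ depends only on $t\bmod p_1$, and the geometric sum over $t_2$ then annihilates every $i$ except the unique $i_a$ with $p_1^{m-1}\mid(a+i_a)$; writing $a+i_a=p_1^{m-1}j_a$ one gets $S_a=p_1^{m-1}\sum_{t_1\in\Z_{p_1}^*}g(\bar{\theta}^{t_1})\psi_{j_a}(t_1)$, a sum of the four Galois conjugates over $\Z_{p_1}^*$ twisted by an \emph{additive} character of $\Z_{p_1}$, not a single twisted Gauss-sum value. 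The dependence on $a$ is only through $j_a\in\Z_{p_1}$, and grouping $t_1$ by the cosets $g^i\la p\ra$ turns the coset sums into the Gauss periods $\eta_j$ of $\Z_{p_1}$.

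Your count of five is also mis-attributed. The fifth value does not come from ``the residual contribution of the characters of order dividing $p_1^{m-1}$'': the trivial character adds the same constant $-p_1^{m-1}$ to every eigenvalue, and the characters of order $p_1^j$, $1\le j\le m-1$, contribute $0$, so neither ever supplies a separate eigenvalue. The five cases are $j_a=0$ (where every coset sum equals $\frac{p_1-1}{4}$) versus $j_a\in g^i\la p\ra$ for $i=0,1,2,3$ (where $\sum_{t_1\in g^i\la p\ra}\psi_{j_a}(t_1)$ is a shifted period $\eta_{i+\ell}$), giving the five quantities $T_1,\dots,T_5$. Two smaller slips: your vanishing criterion is stated backwards --- the series $\sum_{i=0}^{p_1^{m-1}-1}\chi(\ga^i)$ vanishes precisely when $\chi$ is \emph{nontrivial of order dividing} $p_1^{m-1}$ and survives when ${\rm ord}(\chi)=p_1^m$ --- and you never verify $-D=D$ (the paper observes $-1\in C_0$ since $2N\mid(q-1)$ or $q$ is even), which is needed for the ``undirected, simple'' part of the statement.
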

\begin{proof}
Note that $-1 \in C_0$ since either $2N |(q-1)$ or $q$ is even. Hence $-C_i=C_i$ for all $0\leq i \leq N-1$, so $D=-D$. Also $0\not \in D$. We conclude that the Cayley graph $Cay(\mathbb{F}_q,D)$ is undirected and without loops. The Cayley graph   $Cay(\mathbb{F}_q,D)$ is clearly regular of valency $|D|$. The restricted eigenvalues of $Cay(\mathbb{F}_q,D)$, as explained in \cite[p.~122]{bh}, are given by
\begin{equation*}  \ \psi(\gamma^a D)=\sum_{x\in D}\psi(\gamma^a x), \ 0\leq a \leq N-1.\end{equation*}

Now we turn to the computations of $\psi(\gamma^a D)$. We have
\begin{eqnarray*}
\psi(\gamma^a D) &= & \sum_{i=0}^{p_1^{m-1}-1}\psi(\gamma^a C_i) \\
& = &  \sum_{i=0}^{p_1^{m-1}-1}\tau_{i+a} \\
 & = & \frac{1}{N} \sum_{i=0}^{p_1^{m-1}-1} \sum_{\chi \in C_0^\perp} g\left(\bar{\chi}\right)\chi(\gamma^{a+i}) \\
 & = &   \frac{1}{N}  \sum_{\chi \in C_0^\perp} g\left(\bar{\chi}\right) \sum_{i=0}^{p_1^{m-1}-1} \chi(\gamma^{a+i}). \\
\end{eqnarray*}

Consider the inner sum $\sum_{i=0}^{p_1^{m-1}-1} \chi(\gamma^{a+i})$, where $\chi \in C_0^\perp$. Note that $C_0^\perp$ is the unique subgroup of $\widehat{\mathbb{F}_q^*}$ of order $N=p_1^m$. If $\chi \in C_0^\perp$ and $ord(\chi)=1$ (that is, $\chi=\chi_0$), then $g(\bar{\chi})=-1$ and $\sum_{i=0}^{p_1^{m-1}-1} \chi(\gamma^{a+i})=p_1^{m-1}$. If $\chi \in C_0^\perp$ and $ord(\chi)=p_1^j\ (1\leq j \leq m-1)$, then $\chi(\gamma)\not =1$, $\chi(\gamma)^{p_1^{m-1}}=1$, and $\sum_{i=0}^{p_1^{m-1}-1} \chi(\gamma^{a+i})=\chi(\gamma^a)\sum_{i=0}^{p_1^{m-1}-1} \chi(\gamma^{i})=\chi(\gamma^a)\frac{\chi(\gamma)^{p_1^{m-1}}-1}{\chi(\gamma)\ -1}=0$. Hence,
\begin{eqnarray*}
\psi(\gamma^a D) = \frac{1}{N}\left(-p_1^{m-1} +\sum_{{\chi \in C_0^\perp} \atop{ ord(\chi)=p_1^m}} g\left(\bar{\chi}\right) \sum_{i=0}^{p_1^{m-1}-1} \chi(\gamma^{a+i}) \right).
\end{eqnarray*}

Next, we consider the characters $\chi \in C_0^\perp$ such that $ord(\chi)=N=p_1^m$, i.e., the generators of $C_0^\perp$. We define a multiplicative character $\theta$ of $\mathbb{F}_q$ by setting $\theta(\gamma)=\zeta_N$. It is clear that $\theta$ is a generator of $C_0^\perp$. Thus all generators of $C_0^\perp$ are given by $\theta^t$, where $t\in \mathbb{Z}_N^*$. It follows that

\begin{eqnarray*}
\psi(\gamma^a D) &= & \frac{1}{N}\big(-p_1^{m-1} +\sum_{{\chi \in C_0^\perp} \atop{ ord(\chi)=p_1^m}} g\big(\bar{\chi}\big) \sum_{i=0}^{p_1^{m-1}-1} \chi(\gamma^{a+i}) \big)\\
&=&\frac{1}{N}\big(-p_1^{m-1}+ \sum_{t \in \mathbb{Z}_{p_1^m}^*} g\big(\bar{\theta}^t\big) \sum_{i=0}^{p_1^{m-1}-1} \theta^t (\gamma^{a+i})\big).
\end{eqnarray*}

For convenience,  we set $$S_a:= \sum_{t \in \mathbb{Z}_{p_1^m}^*} g\big(\bar{\theta}^t\big) \sum_{i=0}^{p_1^{m-1}-1} \theta^t (\gamma^{a+i}),$$ where $0\leq a \leq N-1.$

For each $t\in \mathbb{Z}_{p_1^m}^*$, we write $t=t_1+p_1 t_2$, where $t_1 \in \mathbb{Z}_{p_1}^*, t_2\in \mathbb{Z}_{p_1^{m-1}}. $ For each $a, \, 0\leq a \leq N-1$, there is a unique $i_a\in \{0,1,2,\ldots, p_1^{m-1}-1\}$, such that $p_1^{m-1} \mid (a+i_a)$. Write $a+i_a=p_1^{m-1}j_a$ for some integer $j_a$. (When $N=p_1$, we have $i_a=0$ and $j_a=a$ for all $0\leq a\leq N-1$.)

By Theorem 2.3, we have $p^{-\frac{f-\tilde{f}}{2}-b } g\big(\bar {\theta}\big) \in O_K$. We can write $p^{-\frac{f-\tilde{f}}{2}-b } g\big(\bar{\theta}\big)= N_0\eta_0 + N_1 \eta_1+ N_2 \eta_2 +N_3 \eta _3$, $N_i\in \mathbb{Z}$, $\forall i$. Making the following transformation,
\begin{equation*}
\left \{\begin{array}{lll}
     M_0=N_0+N_1+N_2+N_3,\\
     M_1=N_0+N_1-N_2-N_3,\\
     M_2=N_0-N_1+N_2-N_3,\\
     M_3=N_0-N_1-N_2+N_3.
                 \end{array} \right.
\end{equation*}
By Theorem 2.4,  the integers $M_0,M_1,M_2,M_3$ satisfy the following conditions:
 \begin{equation}\label{Mequations}
 \qquad  \qquad  \left \{\begin{array}{lll}
    16 p^{\tilde{f}-2b}=M_0^2+p_1(M_1^2+M_2^2+M_3^2),\\
    2M_0M_2+2AM_1M_3=B(M_1^2-M_3^2),\\
    M_0+M_1+M_2+M_3 \equiv 0 \ (\bmod \ 4),\\
    M_1 \equiv M_2 \equiv M_3\  (\bmod \ 2),\\
    M_0\equiv 4p^{-b}\ (\bmod p_1).\\
      \end{array} \right.
\end{equation}
Here the notation is the same as in Section 2.

Next we want to determine how many distinct values $\psi(\gamma^aD)$, $0\leq a \leq N-1$, will take. Since $\psi(\gamma^a D)=\frac{1}{N} (-p_1^{m-1}+S_a)$, it suffices to determine the value distribution of  $\{S_a \mid 0\leq a \leq N-1\}$.

Since $\eta_j,\ 0\leq j\leq 3$, are in $\mathbb{Q}(\zeta_{p_1})$, we have $\sigma_t(\eta_j)=\sigma_{t_1+p_1t_2}(\eta_j)=\sigma_{t_1}(\eta_j)$. Hence $\sigma_t(g\big(\bar{\theta}\big))=\sigma_{t_1}(g\big(\bar{\theta}\big))$. Therefore $g\big({\bar{\theta}}^t\big)=g\big({\bar{\theta}}^{t_1}\big)=p^{\frac{f-\tilde{f}}{2}+b }(N_0\eta_{0}^{\sigma_{t_1}}  + N_1 \eta_{1}^{\sigma_{t_1}}+ N_2 \eta_{2}^{\sigma_{t_1}} +N_3 \eta _{3}^{\sigma_{t_1}})$. We now continue the computations of $S_a$. We have
 \begin{eqnarray*}
 S_a & =&\sum_{t \in \mathbb{Z}_{p_1^m}^*} g\big(\bar{\theta}^t\big) \sum_{i=0}^{p_1^{m-1}-1} \theta^t (\gamma^{a+i})\\
& =& \sum_{t_1 \in \mathbb{Z}_{p_1}^*} \sum_{t_2 \in \mathbb{Z}_{p_1^{m-1}}}  g\big(\bar{\theta}^{t_1+p_1 t_2}\big) \sum_{i=0}^{p_1^{m-1}-1} \theta^{t_1+p_1t_2} (\gamma^{a+i})\\
& =& \sum_{t_1 \in \mathbb{Z}_{p_1}^*} \sum_{t_2 \in \mathbb{Z}_{p_1^{m-1}}}  g\big(\bar{\theta}^{t_1}\big) \sum_{i=0}^{p_1^{m-1}-1} \theta^{t_1+p_1t_2} (\gamma^{a+i})\\
  &= &\sum_{t_1 \in \mathbb{Z}_{p_1}^*} \sum_{i=0}^{p_1^{m-1}-1}g\big(\bar{\theta}^{t_1}\big) \theta^{t_1}(\gamma^{a+i})  \sum_{t_2 \in \mathbb{Z}_{p_1^{m-1}}} (\theta^{p_1} (\gamma^{a+i}))^{t_2}.
\end{eqnarray*}
If $\theta^{p_1(a+i)}(\gamma)\not =1$, that is, $p_1^{m-1}\nmid (a+i)$,  then $$\sum_{t_2 \in \mathbb{Z}_{p_1^{m-1}}} (\theta^{p_1} (\gamma^{a+i}))^{t_2}=\frac{1-\theta^{p_1(a+i)\cdot p_1^{m-1}}(\gamma)}{1-\theta^{p_1(a+i)}(\gamma)}=0.  $$
Recall that for each $a, \ 0\leq a \leq N-1$, there is a unique $i_a\in \{0,1,2,\ldots, p_1^{m-1}-1\} $, such that $p_1^{m-1}\mid (a+i_a)$, and we write $a+i_a=p_1^{m-1}j_a$. Thus we have
$$S_a = p_1^{m-1}\sum_{t_1 \in \mathbb{Z}_{p_1}^*} g\big(\bar{\theta}^{t_1}\big) \theta^{t_1}(\gamma^{p_1^{m-1}j_a}).$$
Note that by the definition of $\theta$, we have $\theta^{t_1}( \gamma^{p_1^{m-1}j_a})=\zeta_N^{p_1^{m-1}j_a\cdot t_1}=\zeta_{p_1}^{j_a\cdot t_1}.$
It will be convenient to introduce $\psi_{j_a}$, which is an additive character of the prime field $\mathbb{Z}_{p_1}$ such that $\psi_{j_a}(t_1)=\zeta_{p_1}^{j_a\cdot t_1}$. In this way, we have
$\theta^{t_1}( \gamma^{p_1^{m-1}j_a})=\psi_{j_a}(t_1)$. We now have
 \begin{eqnarray*}
S_a  & = &  p_1^{m-1}\sum_{t_1 \in \mathbb{Z}_{p_1}^*} g\big(\bar{\theta}^{t_1}\big) \psi_{j_a}(t_1)\\
 &= &p_1^{m-1} p^{\frac{f-\tilde{f}}{2}+b }  \sum_{t_1 \in \mathbb{Z}_{p_1}^*} (N_0\eta_{0}^{\sigma_{t_1}}  + N_1 \eta_{1}^{\sigma_{t_1}}+ N_2 \eta_{2}^{\sigma_{t_1}} +N_3 \eta _{3}^{\sigma_{t_1}}) \psi_{j_a}(t_1)\\
  &= &p_1^{m-1} p^{\frac{f-\tilde{f}}{2}+b }  \sum_{i=0}^{3}\sum_{t_1 \in g^i\langle p\rangle } (N_0\eta_{0}^{\sigma_{t_1}}  + N_1 \eta_{1}^{\sigma_{t_1}}+ N_2 \eta_{2}^{\sigma_{t_1}} +N_3 \eta _{3}^{\sigma_{t_1}}) \psi_{j_a}(t_1)\\
 & = & p_1^{m-1} p^{\frac{f-\tilde{f}}{2}+b } \big[ (N_0\eta_{0}+ N_1 \eta_{1}+ N_2 \eta_{2}+N_3 \eta _{3})  \sum_{t_1 \in \langle p\rangle}\psi_{j_a}(t_1)+ \\
& & \qquad \qquad\ \ (N_0\eta_{1}+ N_1 \eta_{2}+ N_2 \eta_{3}+N_3 \eta _{0})  \sum_{t_1 \in g\langle p\rangle}\psi_{j_a}(t_1)+ \\
 &  & \qquad \qquad\ \ (N_0\eta_{2}+ N_1 \eta_{3}+ N_2 \eta_{0}+N_3 \eta _{1})  \sum_{t_1 \in g^2\langle p\rangle}\psi_{j_a}(t_1)+\\
 &  & \qquad \qquad\ \ (N_0\eta_{3}+ N_1 \eta_{0}+ N_2 \eta_{1}+N_3 \eta _{2})  \sum_{t_1 \in g^3\langle p\rangle}\psi_{j_a}(t_1) \big ].
 \end{eqnarray*}

When $a$ runs through $\mathbb{Z}_N$, $j_a$ runs through $\mathbb{Z}_{p_1}$ correspondingly. Note that $\mathbb{Z}_{p_1}^*=\langle p\rangle  \cup g \langle p\rangle \cup g^2\langle p\rangle \cup g^3 \langle p\rangle.$ We therefore have five cases to consider according to $j_a=0$, and $j_a\in g^i\langle p\rangle$, $i=0,1,2,3$.\\

 \item{Case I }. $j_a=0$.  In this case, we have $\sum_{t_1 \in g^i\langle p\rangle}\psi_{j_a}(t_1)=\frac{p_1-1}{4},$ for $ 0\leq i\leq 3$.
 \begin{eqnarray*}
S_a & = &p_1^{m-1} p^{\frac{f-\tilde{f}}{2}+b } \big[ (N_0\eta_{0}+ N_1 \eta_{1}+ N_2 \eta_{2}+N_3 \eta _{3})  \frac{p_1-1}{4}+ \\
& & \qquad \qquad\ \ (N_0\eta_{1}+ N_1 \eta_{2}+ N_2 \eta_{3}+N_3 \eta _{0})  \frac{p_1-1}{4}+ \\
 &  & \qquad \qquad\ \ (N_0\eta_{2}+ N_1 \eta_{3}+ N_2 \eta_{0}+N_3 \eta _{1})  \frac{p_1-1}{4}+\\
 &  & \qquad \qquad\ \ (N_0\eta_{3}+ N_1 \eta_{0}+ N_2 \eta_{1}+N_3 \eta _{2})  \frac{p_1-1}{4} \big ]\\
&=& -p_1^{m-1} p^{\frac{f-\tilde{f}}{2}+b } (N_0+ N_1 + N_2+N_3)\frac{p_1-1}{4}.\\
\end{eqnarray*}
This value of $S_a$ will be denoted by $p_1^{m-1} p^{\frac{f-\tilde{f}}{2}+b }T_1$, where $T_1=(N_0+ N_1 + N_2+N_3)\frac{1-p_1}{4}$.

\item{Case II}. $j_a\in \langle p\rangle$. In this case $\sum_{t_1 \in g^i\langle p\rangle}\psi_{j_a}(t_1)=\eta_i,$ $0\leq i\leq 3$. We have
\begin{eqnarray*}
S_a &= & p_1^{m-1} p^{\frac{f-\tilde{f}}{2}+b } \big[ (N_0\eta_{0}+ N_1 \eta_{1}+ N_2 \eta_{2}+N_3 \eta _{3})\eta_0 + \\
&&\qquad \qquad \  (N_0\eta_{1}+ N_1 \eta_{2}+ N_2 \eta_{3}+N_3 \eta _{0}) \eta_1 + \\
&& \qquad \qquad \  (N_0\eta_{2}+ N_1 \eta_{3}+ N_2 \eta_{0}+N_3 \eta _{1}) \eta_2 + \\
 && \qquad \qquad \  (N_0\eta_{3}+ N_1 \eta_{0}+ N_2 \eta_{1}+N_3 \eta _{2})\eta_3 \big] \\
&= &p_1^{m-1} p^{\frac{f-\tilde{f}}{2}+b } \big[N_0(\eta_0^2+\eta_1^2+\eta_2^2+\eta_3^2)+ \\ &&\qquad \qquad \  N_1(\eta_0\eta_{1}+\eta_1\eta_{2}+\eta_2\eta_{3}+\eta_3\eta_{0}) + \\ &&\qquad \qquad \  N_2(\eta_0\eta_{2}+\eta_1\eta_{3}+\eta_2\eta_{0}+\eta_3\eta_{1})+ \\
&&\qquad \qquad \  N_3(\eta_0\eta_{3}+\eta_1\eta_{0}+\eta_2\eta_{1}+\eta_3\eta_{2}) \big ].\\
 \end{eqnarray*}
This value of $S_a$  will be denoted by $p_1^{m-1} p^{\frac{f-\tilde{f}}{2}+b }T_2$.

\item{Case III}. $j_a\in g\langle p\rangle$. In this case $\sum_{t_1 \in g^i\langle p\rangle}\psi_{j_a}(t_1)=\eta_{i+1}, $ $ 0\leq i\leq 3$. Similarly we have
\begin{eqnarray*}
S_a &= &p_1^{m-1} p^{\frac{f-\tilde{f}}{2}+b } \big[ N_0(\eta_0\eta_{1}+\eta_1\eta_{2}+\eta_2\eta_{3}+\eta_3\eta_{0})+\\
&& \qquad \qquad \ N_1(\eta_0^2+\eta_1^2+\eta_2^2+\eta_3^2) +\\
 && \qquad \qquad \   N_2(\eta_0\eta_{1}+\eta_1\eta_{2}+\eta_2\eta_{3}+\eta_3\eta_{0})+\\
 && \qquad \qquad \    N_3(\eta_0\eta_{2}+\eta_1\eta_{3}+\eta_2\eta_{0}+\eta_3\eta_{1})\big].\\
 \end{eqnarray*}
This value of $S_a$ will be denoted by $p_1^{m-1} p^{\frac{f-\tilde{f}}{2}+b }T_3$.

\item{Case IV}. $j_a\in g^2\langle p\rangle$. In this case $\sum_{t_1 \in g^i\langle p\rangle}\psi_{j_a}(t_1)=\eta_{i+2},$ $0\leq i\leq 3$. Similarly we have
\begin{eqnarray*}
S_a &= & p_1^{m-1} p^{\frac{f-\tilde{f}}{2}+b } \big[ N_0(\eta_0\eta_{2}+\eta_1\eta_{3}+\eta_2\eta_{0}+\eta_3\eta_{1})+\\
&& \qquad \qquad \   N_1(\eta_0\eta_{1}+\eta_1\eta_{2}+\eta_2\eta_{3}+\eta_3\eta_{0}) +\\
&& \qquad \qquad \    N_2(\eta_0^2+\eta_1^2+\eta_2^2+\eta_3^2)+\\
&& \qquad \qquad \     N_3(\eta_0\eta_{3}+\eta_1\eta_{0}+\eta_2\eta_{1}+\eta_3\eta_{2})\big].\\
\end{eqnarray*}
This value of $S_a$ will be denoted by $p_1^{m-1} p^{\frac{f-\tilde{f}}{2}+b }T_4$.

\item{Case V}. $j_a\in g^3\langle p\rangle$. In this case $\sum_{t_1 \in g^i\langle p\rangle}\psi_{j_a}(t_1)=\eta_{i+3},$ $ 0\leq i\leq 3$. Similarly we have
\begin{eqnarray*}
S_a &= &p_1^{m-1} p^{\frac{f-\tilde{f}}{2}+b } \big[ N_0(\eta_0\eta_{3}+\eta_1\eta_{0}+\eta_2\eta_{1}+\eta_3\eta_{2})+\\
 && \qquad \qquad \     N_1(\eta_0\eta_{2}+\eta_1\eta_{3}+\eta_2\eta_{0}+\eta_3\eta_{1}) +\\
&& \qquad \qquad \      N_2(\eta_0\eta_{1}+\eta_1\eta_{2}+\eta_2\eta_{3}+\eta_3\eta_{0})+\\
&& \qquad \qquad \       N_3(\eta_0^2+\eta_1^2+\eta_2^2+\eta_3^2) \big].\\
\end{eqnarray*}
This value of $S_a$ will be denoted by $p_1^{m-1} p^{\frac{f-\tilde{f}}{2}+b }T_5$.

Therefore we have shown that $S_a$, $0\leq a\leq N-1$, take at most five distinct values. It follows that the Cayley graph $Cay(\mathbb{F}_q,D)$ has at most five distinct restricted eigenvalues. The proof of the theorem is complete.
\end{proof}

We are now ready to consider the question that under what conditions, the Cayley graph $Cay(\mathbb{F}_q,D)$, with $D$ defined in (\ref{defD}), is strongly regular. By Theorem~\ref{basicthm}, the question is the same as asking under what conditions, the Cayley graph $Cay(\mathbb{F}_q,D)$ will have exactly two distinct restricted eigenvalues. Using the transformation between $\{N_0,N_1,N_2,N_3\}$ and $\{M_0,M_1,M_2,M_3\}$, and the following equations satisfied by $\eta_i$,
\begin{equation*}
\left\{\begin{array}{lll}
    \eta_0^2+\eta_1^2+\eta_2^2+\eta_3^2=\frac{1-p_1}{4},\\
   \eta_0\eta_1+\eta_1\eta_2+\eta_2\eta_3+\eta_3\eta_0=\frac{1-p_1}{4},\\   \eta_0\eta_2+\eta_1\eta_3+\eta_2\eta_0+\eta_3\eta_1=\frac{1+3p_1}{4},\\
   \end{array} \right.
\end{equation*}
we have $\{T_1,T_2,T_3,T_4,T_5\}=\{\frac{1-p_1}{4}M_0,\  \frac{1-p_1}{4}M_0+p_1N_0,\  \frac{1-p_1}{4}M_0+p_1N_1,\  \frac{1-p_1}{4}M_0+p_1N_2, \ \frac{1-p_1}{4}M_0+p_1N_3\}.$  From the proof of Theorem~\ref{main}, we see that the value distribution of the restricted eigenvalues of $Cay(\mathbb{F}_q,D)$ is completely determined by the value distribution of $\{T_1,T_2,T_3,T_4,T_5\}$.

\begin{theorem}\label{2ndthm}
If $Cay(\mathbb{F}_q,D)$ is strongly regular, then either $p_1-1$ or $p_1-9$ is a perfect square. In the case where $p_1-1$ is a square, $Cay(\mathbb{F}_q,D)$ is strongly regular if and only if the integer solutions $(M_0,M_1,M_2,M_3)$ of (\ref{Mequations}) satisfy $(M_0:M_1:M_2:M_3)\in\{\ (1:1:1:1),\ (1:1:-1:-1),\ (1:-1:1:-1),\ (1:-1:-1:1)\ \}$. In the case where $p_1-9$ is a square, $Cay(\mathbb{F}_q,D)$ is strongly regular if and only if the integer solutions $(M_0,M_1,M_2,M_3)$ of (\ref{Mequations}) satisfy $(M_0:M_1:M_2:M_3)\in\{\ (3:-1:-1:-1),\ (3:-1:1:1),\ (3:1:-1:1),\ (3:1:1:-1)\}$.
\end{theorem}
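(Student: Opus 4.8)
The plan is to reduce the strong regularity of $Cay(\F_q,D)$ to a combinatorial condition on the vector $(N_0,N_1,N_2,N_3)$ and then to read off the arithmetic from the system (\ref{Mequations}). First I would note that all five numbers $T_1,\dots,T_5$ from the proof of Theorem~\ref{main} actually occur as restricted eigenvalues: as $a$ ranges over $\Z_N$ the residue $j_a$ ranges over all of $\Z_{p_1}$, so the case $j_a=0$ and each of the four cases $j_a\in g^i\la p\ra$ is realized. Hence, by part~(3) of Theorem~\ref{basicthm}, the graph is strongly regular precisely when $\{T_1,\dots,T_5\}$ has exactly two distinct values. By the displayed identity $\{T_1,\dots,T_5\}=\{\tfrac{1-p_1}{4}M_0+p_1N_i:0\le i\le 3\}\cup\{\tfrac{1-p_1}{4}M_0\}$, and since $p_1\neq 0$, this is equivalent to the statement that $\{0,N_0,N_1,N_2,N_3\}$ has exactly two distinct values; that is, there is a unique $v\neq 0$ with $\{N_0,N_1,N_2,N_3\}\subseteq\{0,v\}$ and with exactly $k$ of the $N_i$ equal to $v$, where $1\le k\le 4$.

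Next I would apply the linear transformation between the $N_i$ and the $M_i$ to the $2^4-1=15$ possible $0/v$ patterns and record the resulting projective ratios $(M_0:M_1:M_2:M_3)$. A direct computation shows that $k=1$ yields exactly $(1:1:1:1),(1:1:-1:-1),(1:-1:1:-1),(1:-1:-1:1)$; that $k=3$ yields exactly $(3:-1:-1:-1),(3:-1:1:1),(3:1:-1:1),(3:1:1:-1)$; that $k=2$ yields the six ratios with two vanishing entries; and that $k=4$ yields $(1:0:0:0)$, i.e.\ $M_1=M_2=M_3=0$. Thus the listed ratios are exactly the $k=1$ and $k=3$ patterns, and it remains to show that $k=2$ and $k=4$ never give an srg while $k=1,k=3$ force the claimed square conditions.

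The key step is to substitute these ratios into the second relation of (\ref{Mequations}), namely $2M_0M_2+2AM_1M_3=B(M_1^2-M_3^2)$, remembering that $p_1=A^2+B^2$ is an odd prime with $A\equiv 3\pmod 4$, so $A\neq 0$ and $B\neq 0$. For $k=2$ each of the six ratios either equates a nonzero multiple of $v^2$ to $0$ or forces $B=0$, both impossible; so $k=2$ is excluded. For $k=4$ one has $M_1=M_2=M_3=0$, whence $p^{-e}g(\bar\theta)=v(\eta_0+\eta_1+\eta_2+\eta_3)=-v$ with $e=\tfrac{f-\tilde f}{2}+b$, so the Gauss sum is rational; but a genuine index~$4$ Gauss sum is irrational because $-1\notin\la p\ra$, so $k=4$ is excluded too. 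A $k=1$ ratio reduces the relation to $2v^2(1+A)=0$, forcing $A=-1$ and hence $p_1=1+B^2$, so $p_1-1$ is a perfect square; a $k=3$ ratio reduces it to $2v^2(A-3)=0$, forcing $A=3$ and hence $p_1=9+B^2$, so $p_1-9$ is a perfect square. Since $A$ is determined by $p_1$ and cannot be both $-1$ and $3$, this proves the first assertion and shows the two cases are mutually exclusive.

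Finally I would assemble the two equivalences. If $p_1-1$ is a square then $A=-1$, so the $k=3$ condition $A=3$ fails and $k=2,k=4$ are excluded as above; hence an srg must come from a $k=1$ pattern, i.e.\ one of the four listed ratios. Conversely, if an integer solution of (\ref{Mequations}) has one of those ratios, then $(N_0,N_1,N_2,N_3)$ is a nonzero scalar multiple of a coordinate vector (with $v\neq 0$, since otherwise all $M_i=0$ contradicts the norm equation), so $\{0,N_0,N_1,N_2,N_3\}=\{0,v\}$ and the graph is strongly regular by Theorem~\ref{basicthm}; the $p_1-9$ case is symmetric, using the four $k=3$ ratios. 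The main obstacle is the bookkeeping of the third paragraph: one must check that the single quadratic relation simultaneously annihilates every even-split ($k=2$) pattern and pins $A$ to $-1$ or $3$ on the $k=1$ and $k=3$ patterns, and one must separately invoke the irrationality of the index~$4$ Gauss sum to dispose of $k=4$, the lone pattern the quadratic relation cannot detect.
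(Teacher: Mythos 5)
Your proposal is correct and follows essentially the same route as the paper: both reduce strong regularity to the set $\{T_1,\dots,T_5\}$ having exactly two distinct elements, enumerate the fifteen zero/nonzero patterns of $(N_0,N_1,N_2,N_3)$ (your $k=1,2,3,4$ grouping is exactly the paper's Cases 2--5, 6--11, 12--15, and 1), and use the relation $2M_0M_2+2AM_1M_3=B(M_1^2-M_3^2)$ together with $B\neq 0$ to kill the $k=2$ patterns and to force $A=-1$ (so $p_1-1=B^2$) or $A=3$ (so $p_1-9=B^2$). The only divergence is the $k=4$ pattern, where the paper argues that $M_1=M_2=M_3=0$ forces $M_0^2=16p^{\tilde{f}-2b}$ with $\tilde{f}-2b$ odd (since $p_1\equiv 5 \pmod 8$), a contradiction, whereas you invoke the irrationality of a non-pure Gauss sum via $-1\notin\langle p\rangle$ --- both are valid, the paper's parity argument being slightly more self-contained.
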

\begin{proof}
Up to a permutation of indices, we may assume that
\begin{equation*}
\left\{\begin{array}{lll}
T_1=\frac{1-p_1}{4}M_0,\\
 T_2=\frac{1-p_1}{4}M_0+p_1N_0,\\
 T_3= \frac{1-p_1}{4}M_0+p_1N_1,\\
 T_4=\frac{1-p_1}{4}M_0+p_1N_2, \\
 T_5=\frac{1-p_1}{4}M_0+p_1N_3. \end{array} \right.
\end{equation*}
We first note that the set $\{T_1,T_2,T_3,T_4,T_5\}$ has at least two distinct elements. Otherwise, we will have $N_0=N_1=N_2=N_3=0$; it follows that the Gauss sum $g\big({\bar \theta}\big)=0$, which is impossible.

If the set $\{T_1,T_2,T_3,T_4,T_5\}$ has exactly two distinct elements, there are fifteen possible cases in total. We discuss these cases one by one.\\

\item{Case 1}. $T_2=T_3=T_4=T_5 \not =T_1  \Leftrightarrow N_0=N_1=N_2=N_3\not =0 \Leftrightarrow M_1=M_2=M_3=0,M_0\not =0 $. Under the assumptions of this case, we have $M_0^2=16p^{\tilde{f} -2b}$. But $\tilde{f}=\frac {p_1 -1}{4}$ is odd since $p_1\equiv 5$ (mod 8). It follows that $M_0\not\in \mathbb{Z}$, a contradiction. We conclude that Case 1 cannot occur.\\

\item{Case 2}. $T_1=T_3 = T_4=T_5 \not= T_2 \Leftrightarrow N_1=N_2=N_3=0,\  N_0\not =0  \Leftrightarrow (M_0:M_1:M_2:M_3)=(1:1:1:1)$. In this case we have $A=-1$ and $p_1-1=B^2$.\\

\item{Case 3}. $T_1=T_2 = T_4=T_5 \not= T_3 \Leftrightarrow N_0=N_2=N_3=0,\ N_1\not =0 \Leftrightarrow (M_0:M_1:M_2:M_3)=(1:1:-1:-1) $. In this case we have $A=-1$ and $p_1-1=B^2$.\\

\item{Case 4}. $T_1=T_2 = T_3 = T_5 \not= T_4 \Leftrightarrow N_0=N_1=N_3=0,\ N_2\not =0 \Leftrightarrow (M_0:M_1:M_2:M_3)=(1:-1:1:-1)$. In this case we have $A=-1$ and $p_1-1=B^2$.\\

\item{Case 5}. $T_1=T_2=T_3 = T_4 \not =T_5 \Leftrightarrow N_0=N_1=N_2=0,\ N_3\not =0 \Leftrightarrow (M_0:M_1:M_2:M_3)=(1:-1:-1:1)$. In this case we have $A=-1$ and $p_1-1=B^2$.\\

\item{Case 6}. $T_1= T_4=T_5 \not= T_2=T_3 \Leftrightarrow N_2=N_3=0,\ N_0=N_1\not =0 \Leftrightarrow M_0=M_1\neq 0,\;M_2=M_3=0$. In this case we have $B=0$, which is impossible.\\

\item{Case 7}. $T_1= T_3=T_5 \not= T_2=T_4 \Leftrightarrow N_1=N_3=0,\ N_0=N_2\not =0 \Leftrightarrow M_0=M_2,M_1=M_3=0$. In this case, we have $M_0=M_1=M_2=M_3=0$, which is impossible.\\

\item{Case 8}. $T_1= T_3=T_4 \not= T_2=T_5 \Leftrightarrow N_1=N_2=0,\ N_0=N_3\not =0\Leftrightarrow M_0=M_3\neq 0\;,M_1=M_2=0$. In this case we have $B=0$, which is impossible.\\

\item{Case 9}. $T_1= T_2=T_5 \not= T_3=T_4 \Leftrightarrow N_0=N_3=0,\ N_1=N_2\not =0\Leftrightarrow M_0=-M_3,M_1=M_2=0$. In this case we have $B=0$, which is impossible.\\

\item{Case 10}. $T_1= T_2=T_4 \not= T_3=T_5 \Leftrightarrow N_0=N_2=0,\ N_1=N_3\not =0\Leftrightarrow M_0=-M_2,M_1=M_3=0$. In this case we have $M_0=M_1=M_2=M_3=0$, which is impossible.\\

\item{Case 11}. $T_1= T_2=T_3 \not =T_4=T_5 \Leftrightarrow N_0=N_1=0,\ N_2=N_3\not =0\Leftrightarrow M_0=-M_1,M_2=M_3=0$. In this case we have $B=0$, which is impossible.\\

\item{Case 12}. $T_3= T_4=T_5 \not= T_1=T_2 \Leftrightarrow N_1=N_2=N_3\not =0,\ N_0 =0\Leftrightarrow (M_0:M_1:M_2:M_3)=(3:-1:-1:-1)$. In this case we have $A=3$ and $p_1-9=B^2$.\\

\item{Case 13}. $T_2= T_4=T_5 \not =T_1=T_3 \Leftrightarrow N_0=N_2=N_3\not =0,\ N_1=0\Leftrightarrow (M_0:M_1:M_2:M_3)=(3:-1:1:1)$. In this case we have $A=3$ and $p_1-9=B^2$.\\

\item{Case 14}. $T_2=T_3= T_5 \not =T_1=T_4 \Leftrightarrow N_0=N_1=N_3\not =0,\ N_2=0\Leftrightarrow (M_0:M_1:M_2:M_3)=(3:1:-1:1)$. In this case we have $A=3$ and $p_1-9=B^2$.\\

\item{Case 15}. $T_2=T_3= T_4\not =T_1=T_5 \Leftrightarrow N_0=N_1=N_2\not =0,\ N_3 =0\Leftrightarrow (M_0:M_1:M_2:M_3)=(3:1:1:-1)$. In this case we have $A=3$ and $p_1-9=B^2$.\\

If $Cay(\mathbb{F}_q,D)$ is strongly regular, then it has exactly two distinct restricted eigenvalues, thus $\{T_1,T_2,T_3,T_4,T_5\}$ has exactly two distinct elements. From the analysis above, either $p_1-1$ or $p_1-9$ is a square;  suppose $(M_0,M_1,M_2,M_3)$ is a solution of (\ref{Mequations}), we see that $(M_0,M_1,M_2,M_3)$ must be one of the possibilities listed in the statement of the theorem. That is, when $A=-1,\ p_1-1$ is a perfect square, $(M_0:M_1:M_2:M_3)\in\{\ (1:1:1:1),\ (1:1:-1:-1),\ (1:-1:1:-1),\ (1:-1:-1:1)\ \}$;  when $A=3,\ p_1-9$ is perfect square and $(M_0:M_1:M_2:M_3)\in\{\ (3:-1:-1:-1),\ (3:-1:1:1),\ (3:1:-1:1),\ (3:1:1:-1)\}$.

Conversely, if the integer solutions $(M_0,M_1,M_2,M_3)$ of (\ref{Mequations}) satisfy the conditions stated in the theorem, then it is easy to see from the above analysis that $\{T_1,T_2,T_3,T_4,T_5\}$ has exactly two distinct elements. It follows that $Cay(\mathbb{F}_q,D)$ is strongly regular.

The proof of the theorem is now complete.
\end{proof}

\section{New infinite families of strongly regular Cayley graphs}

We used a computer to search for prime pairs $(p, p_1)$, $2\leq p <10,000$, $3\leq p_1<10,000$, satisfying the conditions specified in Section 2 and in the statement of Theorem~\ref{2ndthm}. We found two such pairs which are given below. Note that in general for a prime pair $(p, p_1)$ satisfying the conditions $p_1\equiv 5\pmod 8$, $\gcd(p(p-1), p_1)=1$ and ${\rm ord}_{p_1^m}(p)=\phi(p_1^m)/4$ for all $m\geq 1$, there are possibly many solutions $(M_1,M_2,M_3,M_4)$  to (\ref{Mequations}); only those solutions $(M_1,M_2,M_3,M_4)$ which can be used to represent the Gauss sums $g\big({\bar \theta}\big)$ should be considered. We refer the reader to Lemma 3.2 of \cite{Feng} for a method to decide when a solution $(M_1,M_2,M_3,M_4)$ to (\ref{Mequations}) can be used to represent the Gauss sum $g\big({\bar \theta}\big)$.

\vspace{0.1in}

 \noindent {\bf Example 4.1.}
Let $p_1=37,\;  p=7,\; N=p_1^m$ where $m\geq 1$ is any integer. Note that in this case we have $p_1\equiv 5$ (mod 8) and $p_1>5$. It is straightforward to check that $ord_{37}(7)=9=\frac{\phi(37)}{4}$. By induction on $m$, one can show that $ord_{37^m}(7)=\frac{\phi(37^m)}{4}$. Let $f=ord_{37^m}(7)=\frac{\phi(37^m)}{4}$ and $\mathbb{F}_q$ be the finite field of order $q=7^f$. Let $\gamma$ be a fixed primitive element of $\mathbb{F}_q$. Let $C_0=\langle \gamma^N\rangle, C_1=\gamma C_0,\ldots, C_{N-1}=\gamma^{N-1}C_0$ be the $N$-th cyclotomic classes of $\mathbb{F}_q$ and let $$D=\bigcup _{i=0}^{37^{m-1}-1}C_i.$$ We claim that the Cayley graph $Cay(\mathbb{F}_q,D)$ is strongly regular. To prove this claim, it suffices to apply Theorem~\ref{2ndthm} to the current situation.

 \begin{lemma}{\em (Example 1, \cite{Feng})}
 When $p_1=13$ or $37$, we have $$b={\rm min}\{b_0,b_1,b_2,b_3\}=\frac{\tilde{f}-1}{2},$$ where $\tilde{f}=\frac{\phi(p_1)}{4}$.
  \end{lemma}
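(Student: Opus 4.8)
The plan is to treat this as an explicit finite computation for the two values $p_1=13$ and $p_1=37$, after first making two structural reductions that cut the work in half and pin down exactly which classes are being summed.

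First I would observe that the quantity $b$ is intrinsic to $p_1$. Since $\mathbb{Z}_{p_1}^*$ is cyclic of order $p_1-1$ and $4\mid(p_1-1)$, the index-$4$ subgroup $\langle p\rangle$ is the \emph{unique} subgroup of order $\tilde{f}=\frac{p_1-1}{4}$, namely the group of quartic residues $\{x^4 : x\in\mathbb{Z}_{p_1}^*\}$. Consequently the unordered family of cosets $\{\tilde{C}_0,\tilde{C}_1,\tilde{C}_2,\tilde{C}_3\}$, the multiset $\{b_0,b_1,b_2,b_3\}$, and hence $b=\min_j b_j$, do not depend on the auxiliary choices of $p$ or of the primitive root $g$: any valid choice merely relabels the cosets, leaving the minimum unchanged. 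In particular I may take $p=3$ when $p_1=13$ and $p=7$ when $p_1=37$, the primes used in the two families constructed here.

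Second I would record a symmetry. Because $p_1\equiv 5\pmod 8$, the order $\tilde{f}=\frac{p_1-1}{4}$ is odd, so $-1\notin\langle p\rangle$; its image in the cyclic quotient $\mathbb{Z}_{p_1}^*/\langle p\rangle\cong\mathbb{Z}_4$ is the unique element of order $2$, i.e. $-1\in\tilde{C}_2$. Hence $\tilde{C}_2=-\tilde{C}_0$ and $\tilde{C}_3=-\tilde{C}_1$, and pairing $z\leftrightarrow p_1-z$ inside each class gives
\[
b_0+b_2=b_1+b_3=\tilde{f}.
\]
Combined with $\sum_{j}b_j=\frac{1}{p_1}\sum_{z=1}^{p_1-1}z=\frac{p_1-1}{2}=2\tilde{f}$, this reduces the task to computing the two numbers $b_0$ and $b_1$, and it supplies a built-in consistency check on the arithmetic.

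Finally I would carry out the two evaluations. For $p_1=13$ one has $\tilde{C}_0=\langle 3\rangle=\{1,3,9\}$, and choosing $g=2$ gives $\tilde{C}_1=\{2,5,6\}$, $\tilde{C}_2=\{4,10,12\}$, $\tilde{C}_3=\{7,8,11\}$; dividing each class sum by $13$ yields $(b_0,b_1,b_2,b_3)=(1,1,2,2)$, so $b=1=\frac{\tilde{f}-1}{2}$. For $p_1=37$ one has $\tilde{C}_0=\langle 7\rangle=\{1,7,9,10,12,16,26,33,34\}$, and with $g=2$ the classes compute to $(b_0,b_1,b_2,b_3)=(4,5,5,4)$, so $b=4=\frac{\tilde{f}-1}{2}$. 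In both cases the identity $b_0+b_2=b_1+b_3=\tilde{f}$ is satisfied. There is no genuine analytic obstacle here: the only point requiring care is the bookkeeping of the cosets together with the remark that the minimum is independent of the choice of $g$, after which the lemma follows by direct evaluation.
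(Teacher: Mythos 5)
Your proposal is correct, and all the arithmetic checks out: for $p_1=13$ (with $\langle 3\rangle=\{1,3,9\}$, $g=2$) one indeed gets $(b_0,b_1,b_2,b_3)=(1,1,2,2)$, and for $p_1=37$ (with $\langle 7\rangle=\{1,7,9,10,12,16,26,33,34\}$, $g=2$) one gets $(4,5,5,4)$, so $b=\frac{\tilde f-1}{2}$ in both cases. Note, however, that the paper does not prove this lemma at all --- it simply cites Example 1 of \cite{Feng} --- so what you have supplied is a self-contained verification where the paper offers only a pointer. Your two structural reductions are genuine added value beyond brute force: the observation that $\langle p\rangle$ is the unique index-$4$ subgroup of the cyclic group $\mathbb{Z}_{p_1}^*$ (hence the quartic residues, so that the coset partition, the multiset $\{b_0,b_1,b_2,b_3\}$, and the minimum are independent of the auxiliary choices of $p$ and $g$) is exactly the point one must make for the lemma to be well posed as a statement about $p_1$ alone; and the symmetry argument is sound, since $\tilde f=\frac{p_1-1}{4}$ is odd when $p_1\equiv 5\pmod 8$, forcing the image of $-1$ in $\mathbb{Z}_{p_1}^*/\langle p\rangle\cong\mathbb{Z}_4$ to be the unique involution, whence $\tilde C_2=-\tilde C_0$, $\tilde C_3=-\tilde C_1$, and the pairing $z\leftrightarrow p_1-z$ (a bijection between $[1,p_1-1]\cap\tilde C_j$ and $[1,p_1-1]\cap\tilde C_{j+2}$ with each pair summing to $p_1$) yields $b_0+b_2=b_1+b_3=\tilde f$, consistent with $\sum_j b_j=\frac{p_1-1}{2}=2\tilde f$. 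This both halves the computation and supplies the consistency check your final tallies pass. In short: correct, complete, and more informative than the citation the paper relies on.
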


Now for $p_1=37$, we have $\tilde{f}=\frac{\phi(37)}{4}=9,\ b=4$, and $p_1-1=36$ is a perfect square. The integer solutions $(A,B)$ to $p_1=A^2+B^2$ with $A\equiv 3\ (\bmod\  4)$ are $(-1, \pm 6)$. That is, $A=-1$ and $B=\pm 6$. Also $4p^{-b}=4\cdot 7^{-4}\equiv 4\cdot 9\equiv -1\ (\bmod\  37)$. We need to determine the $(M_0,M_1,M_2,M_3)$ satisfying (\ref{Mequations}). In our case, (\ref{Mequations}) becomes:
\begin{equation*}
\left\{\begin{array}{lll}
    112=M_0^2+37(M_1^2+M_2^2+M_3^2),\\
    2M_0M_2-2M_1M_3=B(M_1^2-M_3^2),\\
    M_0+M_1+M_2+M_3 \equiv 0 \ (\bmod \ 4),\\
    M_1 \equiv M_2 \equiv M_3 \ (\bmod \ 2),\\
    M_0\equiv -1\ (\bmod \ 37).\\
        \end{array} \right.
\end{equation*}
From the first equation we obtain $M_0^2=1$ and $M_1^2+M_2^2+M_3^2=3$. Therefore, $M_0=-1$, and $M_1,M_2,M_3\in \{\pm 1\}$. Together with the conditions, we get a total of four integer solutions $(-1,1,1,-1),\ (-1,1,-1,1),\ (-1,-1,1,1),\ (-1,-1,-1,-1)$.  Since each of these four solutions satisfies the conditions of Theorem 3.2, we conclude that $Cay(\mathbb{F}_q,D)$ is a strongly regular graph, with parameters
$$v=7^{9\cdot 37^{m-1}}, \ k=\frac{v-1}{37}, \; r=\frac{9\cdot 7^{\frac{ 9\cdot 37^{m-1}  -1} {2} } -1}{37},\; {\rm and}\ s=\frac{-4 \cdot 7^{\frac{9\cdot 37^{m-1} +1} {2} } -1}{37}.$$\\

\noindent {\bf Example 4.2.} Let $p_1=13,\  p=3,\ N=p_1^m$, where $m\geq 1$ is an integer. By induction on $m$, we also can show that $ord_{13^m}(3)=\frac{\phi(13^m)}{4}.$ Also, we let $f=\frac{\phi(13^m)}{4},\ q=3^f$, and $C_0,C_1,\ldots, C_{N-1}$ be the $N$-th cyclotomic classes of $\mathbb{F}_q$. Using $$D=\bigcup _{i=0}^{13^{m-1}-1}C_i$$ as connection set, we construct the Cayley graph $Cay(\mathbb{F}_q,D)$. Now $p_1-9=4$ is a perfect square, $\tilde{f}=\frac{\phi(13)}{4}=3$ and $b=\frac{\tilde{f}-1}{2}=1$ by Lemma 4.1.

The integer solutions $(A,B)$ to $p_1=A^2+B^2$ with $A\equiv 3\ (\bmod\  4)$ are $(3, \pm 2)$. That is, $A=3$ and $B=\pm 2$. Also $4p^{-b}=4\cdot 3^{-1}\equiv 4\cdot (-4)\equiv -3\ (\bmod \ 13)$. We need to determine the $(M_0,M_1,M_2,M_3)$ satisfying (\ref{Mequations}). In our case, (\ref{Mequations}) becomes
\begin{equation*}
\left\{\begin{array}{lll}
    48=M_0^2+13(M_1^2+M_2^2+M_3^2),\\
    2M_0M_2+6M_1M_3=B(M_1^2-M_3^2),\\
    M_0+M_1+M_2+M_3 \equiv 0 \ (\bmod \ 4),\\
    M_1 \equiv M_2 \equiv M_3 \ (\bmod \ 2),\\
    M_0\equiv -3\ (\bmod \ 13).\\
        \end{array}\right.
\end{equation*}
From the first equation we obtain $M_0^2=9$ and $M_1^2+M_2^2+M_3^2=3$. Therefore, $M_0=-3$ and $M_1,M_2,M_3\in \{\pm 1\}$. Similarly, we also get four solutions $(-3,-1,-1,1),\ (-3,1,-1,-1),\ (-3,-1,1,-1),\ (-3,1,1,1)$. Since each of them satisfies the conditions of Theorem 3.2, we conclude that $Cay(\mathbb{F}_q,D)$ is also a strongly regular graph.

If $m=1$, then $N=13,\ f=3,\ q=p^f=27$ and $D=C_0=\mathbb{F}_3^*$, where $\mathbb{F}_3$ is the prime subfield of $\mathbb{F}_{3^3}$. The strongly regular graph in this case belongs to the so-called {\it subfield case}, and is rather boring. But for $m \geq 2$, the strongly regular graphs $Cay(\mathbb{F}_q,D)$ are new and their parameters are
$$v=3^{3\cdot 13^{m-1}},\ k=\frac{v-1}{13}, \; r=\frac{3^{ \frac{3\cdot 13^{m-1}+3 } {2}   }-1  }  {13},  \; {\rm and} \; s=\frac{-4\cdot 3^{ \frac{3\cdot 13^{m-1}-1 } {2}   }-1  }  {13}.$$


\begin{thebibliography}{99}
\bibitem{batdov} L. Batten, J. Dover, Some sets of type $(m,n)$ in cubic order planes, {\it Des. Codes Cryptogr.} {\bf 16} (1999), 211--213.

\bibitem{bmw} L. D. Baumert, W. H. Mills, R. L. Ward, Uniform cyclotomy, {\it J. Number Theory}, {\bf 14} (1982), 67--82.


\bibitem{bew}B. C. Berndt, R. J. Evans, and K. S. Williams, {\it Gauss and Jacobi
Sums}, A Wiley-Interscience Publication, 1998.

\bibitem{bh} A. E. Brouwer,  W. H. Haemers, {\it Spectra of Graphs}, Springer Universitext, 2012.

\bibitem{bwx} A. E. Brouwer, R. M. Wilson, and Q. Xiang, Cyclotomy and strongly regular graphs, {\it J. Algebraic Combin.} {\bf 10} (1999), 25--28.

\bibitem{CK86}R. Calderbank, W. M. Kantor, The geometry of two-weight codes, {\it
Bull. London Math. Soc} {\bf 18-2} (1986), 97--122.

\bibitem{vanDamMu} E. van Dam, M. Muzychuk, Some implications on amorphic association schemes,
{\it J. Combin. Theory} (A) {\bf 117} (2010), 111-127

\bibitem{Feng} K. Feng, J. Yang, and S. Luo, Gauss sum of index 4:(1) cyclic case, {\it Acta Math. Sin. (Engl. Ser.)} {\bf 21-6} (2005), 1425--1434.

\bibitem{fengxiang} T. Feng, Q. Xiang, Strongly regular graphs from unions of cyclotomic classes, {\it J. Combin. Theory} (B), in press.

\bibitem{cg} C. Godsil, G. Royle, {\it Algebraic Graph Theory}, GTM 207, Springer-Verlag, 2001.

\bibitem{ikutam} T. Ikuta, A. Munemasa, Pseudocyclic association schemes and strongly regular graphs, {\it European J. Combin.} {\bf 31} (2010), 1513--1519.

\bibitem{irero} K. Ireland, M. Rosen, {\it A Classical Introduction to Modern Number Theory, Second Edition},  Graduate Text in Math. No. 84, Springer-Verlag, Berlin/New York/Heidelberg, 2003.

\bibitem{Lang} P. Langevin, A new class of two-weight codes, in {\em Finite Fields and Applications} (Glasgow 1995), London Math. Soc. Lecture Note Series, No. 233, S. Cohen and H. Niederreiter, eds. Cambridge University Press, 1996, pp. 181--187.


\bibitem{del} C. L. M.  de Lange, Some new cyclotomic strongly regular graphs, {\it J. Algebraic Combin.}  {\bf 4} (1995), 329--330.

\bibitem{vanLintSch} J. H. van Lint, A. Schrijver, Construction of strongly regular graphs, two-weight codes and partial geometries by finite fields, {\it Combinatorica} {\bf 1} (1981), 63--73.

\bibitem{Ma94} S. L. Ma, A survey of partial difference sets, {\it Des. Codes Cryptogr.} {\bf 4} (1994), 221-261.


\bibitem{McE} R. J. McEliece, Irreducible cyclic codes and Gauss sums. Combinatorics (Proc. NATO Advanced Study Inst., Breukelen, 1974), Part 1: Theory of designs, finite geometry and coding theory, pp. 179--196. Math. Centre Tracts, No. 55, Math. Centrum, Amsterdam.

\bibitem{schwh} B. Schmidt, C. White, All two-weight irreducible cyclic codes, {\it Finite Fields Appl.}  {\bf 8} (2002), 1--17.

\bibitem{stic} L. Stickelberger, \"Uber eine Verallgemeinerung der Kreistheilung, {\it Math. Annal.} {\bf 37} (1890), 321--367.

\bibitem{yx} J. Yang, L. Xia, Complete solving of explicit evaluation of Gauss sums in the index 2 case, {\it  Sci. China Math.} {\bf 53} (2010), 2525--2542.



\end{thebibliography}
\end{document}